\newtheorem
{theorem}{Theorem}[section]
\newtheorem{proposition}[theorem]{Proposition}
\newtheorem{remark}[theorem]{Remark}
\newtheorem{definition}[theorem]{Definition}
\newtheorem*{theorem*}{Theorem}
\newtheorem*{conjecture*}{Conjecture}
\numberwithin{equation}{section}
\newcommand{\C}{{\mathbb C} }
\newcommand{\R}{{\mathbb R} }
\newcommand{\cA}{{\mathcal A} }
\newcommand{\cE}{{\mathcal E} }
\newcommand{\cF}{{\mathcal F} }
\newcommand{\cG}{{\mathcal G} }
\newcommand{\cI}{{\mathcal I} }
\newcommand{\cK}{{\mathcal K} }
\newcommand{\cL}{{\mathcal L} }
\newcommand{\cM}{{\mathcal M} }
\newcommand{\cO}{{\mathcal O} }
\newcommand{\cZ}{{\mathcal Z} }
\newcommand{\wt}{\widetilde}
\newcommand{\wh}{\widehat}
\def\ol#1{{\overline{#1}}}
\def\ul#1{{\underline{#1}}}
\def\ii{\sqrt{-1}}
\def\pt{\partial}
\def\cinf{C^\infty}
\def\we{\wedge}
\def\ka{K\"ah\-ler}
\def\he{Her\-mite-Ein\-stein}
\def\wp{Weil-Pe\-ters\-son}
\def\ks{Kodaira-Spencer}
\def\psh{plurisubharmonic}
\newenvironment{abstracts}{%
  \ifx\maketitle\relax
    \ClassWarning{\@classname}{Abstract should precede
      \protect\maketitle\space in AMS document classes; reported}%
  \fi
  \global\setbox\abstractbox=\vtop \bgroup
    \normalfont\Small
    \list{}{\labelwidth\z@
      \leftmargin3pc \rightmargin\leftmargin
      \listparindent\normalparindent \itemindent\z@
      \parsep\z@ \@plus\p@
      
      \itemsep\medskipamount
    }%
}{%
  \endlist\egroup
  \ifx\@setabstract\relax \@setabstracta \fi
}
\newcommand{\abstractin}[1]{%
  \otherlanguage{#1}%
  \item[\hskip\labelsep\scshape\abstractname.]%
}
\begin{document}

\title[Weil-Petersson current for moduli of vector bundles]{The Weil-Petersson current\\ for moduli of vector bundles\\ and applications to orbifolds
}
\author[I.\ Biswas]{Indranil Biswas}

%\selectlanguage{english}
\address{School of Mathematics, Tata Institute of Fundamental Research, Homi Bhabha Road, Mumbai 400005, India}
\email{indranil@math.tifr.res.in}
\author[G.\ Schumacher]{Georg Schumacher}

\address{Fachbereich Mathematik und Informatik,
Philipps-Universität Marburg, \break Lahnberge, Hans-Meerwein-Stra\ss e, D-35032
Marburg, Germany} \email{schumac@mathematik.uni-marburg.de}
\subjclass[2000]{14J60, 32G13, 32L10}
%\maketitle
\begin{abstracts}
  \abstractin{french}
  Nous étudions les fibrés vectoriels holomorphes stables sur une varieté kählérienne compacte ou plus généralement sur une orbifold possédant une structure kählérienne. Dans ce contexte, nous utilisons l'existence d'une connection Hermite-Einstein et construisons une forme de Weil-Petersson généralisée sur l'espace des modules des fibrés holomorphes stables à fibré déterminant fixé. Nous montrons que la forme de Weil-Petersson s'étend en un courant (semi-)positif fermé pour des dégénerescences de familles qui sont des restrictions de faisceaux cohérents. Ce courant sera appelé un courant de Weil-Petersson. Dans le cas d'une orbifold de type Hodge, un fibré en droite déterminant existe sur l'espace des modules. Ce fibré en droites est muni d'une métrique de Quillen dont la courbure coincide avec la forme de Weil-Petersson généralisée. En application, nous montrons que le fibré en droites déterminant s'étend à une compactification de l'espace des modules.
  \abstractin{english}
  We investigate stable holomorphic vector bundles on a compact complex \ka\ manifold and more generally on an orbifold that is  equipped with a \ka\ structure. We use the existence of Hermite-Einstein connections in this set-up and construct a generalized \wp\ form on the moduli space of stable vector bundles with fixed determinant bundle. We show that the \wp\ form extends as a (semi-)positive closed current for degenerating families that are restrictions of coherent sheaves. Such an extension will be called a \wp\ current. When the orbifold is of Hodge type, there exists a certain determinant line bundle on the moduli space; this line bundle carries a Quillen metric, whose curvature coincides with the generalized \wp\ form. As an application we show that the determinant line bundle extends to a suitable compactification of the moduli space.
\end{abstracts}

\maketitle

\selectlanguage{english}

\section{Introduction}
Holomorphic vector bundles on complex projective manifolds, and more generally on compact \ka\ manifolds, have been extensively studied. Here we consider holomorphic orbifold vector bundles on compact complex orbifolds. One can easily transfer the analytic methods from the manifold case and define \ka\ orbifold structures; also, notions like stability of vector bundles carry over. We first observe that the Donaldson-Uhlenbeck-Yau theorem generalizes to vector bundles on compact \ka\ orbifolds. So the moduli space of stable vector bundles with fixed determinant bundle on a compact complex orbifold admits a generalized \wp\ form. At this point the moduli space is considered as a reduced complex space with a \ka\ structure. Locally, on smooth ambient spaces, the generalized \wp\ form possesses differentiable $\pt\ol\pt$-potentials.

We call a \ka\ orbifold to be of {\em Hodge type}, if the \ka\ form is the Chern form of a positive orbifold line bundle. A theorem of Baily \cite{bai} states that in this case the underlying normal complex space is projective -- a fact that also follows from Grauert's theorem \cite[Satz 3]{grau}. For a \ka\ orbifold of Hodge type we see that there exists a determinant line bundle on the moduli space, equipped with a Quillen type metric, whose curvature form is equal to the generalized \wp\ form.

Orbifolds are examples of {\em analytic stacks}. For {\em algebraic stacks}, Lieblich's theorem, \cite{lieb}, says that the moduli space of stable vector bundles is in fact an {\em algebraic space}. In particular, it possesses a compactification as an analytic space. Our main result states that for moduli of stable holomorphic vector bundles the \wp\ form extends as a positive, closed current to a compactification, thus answering a conjecture of Andrei Teleman  (\cite{tele}) in the affirmative way. The extension property holds, whenever a given family of stable vector bundles on a \ka\ manifold is the restriction of a coherent sheaf.

For projective manifolds the \wp\ form is the curvature form of a certain determinant line bundle, equipped with a Quillen metric. This fact can be shown in the orbifold case by making use of a generalized Riemann-Roch formula for hermitian vector bundles on \ka\ orbifolds by X.~Ma.

{\bf Acknowledgements.} The second named author would like to thank Jean-Michel Bismut for his letter concerning the generalization of Theorem~\ref{th:bgs} and Xiaonan Ma for his letter and for sending \cite{ma}. The first named author acknowledges support by a J.~C.~Bose Fellowship.

\section{Complex orbifolds}\label{se:orbi}
Satake introduced orbifolds in \cite{satake}; he called them {\em V-manifolds}. By definition, the underlying topological space of an orbifold satisfies the second countability axiom (or ``the topology is countable at infinity'') together with an open locally finite covering $\{U_i\}_{i\in I}$; the local models of a complex orbifold of dimension $n$ are of the type $(W,\Gamma)$, where $W\subset \C^n$ is some domain, and $\Gamma$ is a finite group acting on $W$ in a
holomorphic way. By Cartan's theorem, one can assume that the action of $\Gamma$ on $W$ is linear. Furthermore, we assume that for a general point of $W$ the isotropy group is trivial.

Let $X$ be the underlying complex space of a complex orbifold $\underline{X}$ of dimension $n$. An {\em orbifold chart} or {\em local uniformizing system} on an open subset $$U\subset X$$ is a triple $(W, \Gamma, \varphi)$, where $(W,  \Gamma)$ is as above, and
$$
\varphi:W \to  U
$$
is a continuous map which induces a homeomorphism $W/\Gamma \to U$. Since quotients by cyclic groups that are generated by generalized reflections are smooth, sometimes it is assumed that all fixed point sets of non-trivial group elements are of codimension $\geq 2$. This assumption is not necessary in general, and we do not make this assumption at this point.

Let $(W', \Gamma', \varphi')$ be another orbifold chart for $U'\subset X$ such that $U\subset U'$. Then an {\em injection}
\begin{equation}\label{f1}
   (W,\Gamma, \varphi)\hookrightarrow  (W', \Gamma',\varphi')
\end{equation}
is given by a  biholomorphic map $\lambda$ from $W$ to an open subset of $W'$ satisfying the condition that for all $\gamma \in \Gamma$, there exists some $\gamma' \in \Gamma'$ with $\lambda \circ \gamma = \gamma' \circ \lambda$.

A collection $\mathcal{A}= \{(W_i, \Gamma_i, \varphi_i)\}$ of orbifold charts is called an {\em orbifold atlas} on $\underline{X}$, if the following conditions are satisfied:
\begin{enumerate}
  \item[(i)] The collection $\{U_i\}$ is an open cover, closed under finite intersections.
  \item[(ii)] For any $U_i \subset U_j$, there exist injections of the corresponding orbifold charts given by
\begin{equation}\label{lij}
\lambda_{ji}: W_i \to  W_j ,
\end{equation}
which are unique up to elements $g_i\in \Gamma_i$ in the sense that all such charts are of the form $g_{ji}\circ \lambda_{ji}$ for a unique $g_{ji}\in\Gamma_j$ and that for any $g_i\in \Gamma_i$, the map $\lambda_{ji}\circ g_i$ defines another orbifold chart.
 \item[(iii)] The composition of injections of orbifold charts is again an injection of orbifold charts.
\end{enumerate}
These axioms imply the properties below:
\begin{enumerate}
 \item[(1)] For any injection of orbifold charts $(W_i, \Gamma_i, \varphi_i) \to (W_j, \Gamma_j, \varphi_j)$, there exists a group homomorphism $\gamma_{ji}: \Gamma_i\to\Gamma_j$ such that
$$
\lambda_{ji}\circ g_i = \gamma_{ji}(g_i)\circ \lambda_{ji}
$$
for all $g_i\in \Gamma_i$, where $\lambda_{ji}$ is as in \eqref{lij}.
\item[(2)] For any $U_i\subset U_j \subset U_k$ and corresponding orbifold charts, we have
     $$
     \lambda_{kj}\circ \lambda_{ji} = g_{kji}\circ\lambda_{ki}
     $$
     for a unique $g_{kji}\in \Gamma_k$, where $\lambda_{kj}$, $\lambda_{ji}$ and $\lambda_{ki}$ are as in \eqref{lij}.
\item[(3)] With the above notation, $\gamma_{ki}= Ad(g_{kji})(\gamma_{kj}\circ\gamma_{ji})$.
\end{enumerate}
It may be mentioned that the group action is not required to be effective, like in certain moduli theoretic applications. The existence of such a homomorphism $\Gamma\to \Gamma'$ is to be considered as a part of the defining data.

Equivalence of atlases is defined in the usual way. The corresponding open covers will always be locally finite.

From now on we will fix a compact complex orbifold $\underline X = (X, \cA)$ of dimension $n$, where $X$ is the underlying space and $\cA$ is an equivalence class of atlases.

Given an orbifold chart $(W, \Gamma, \varphi)$, the group $\Gamma$ acts in an equivariant way on the complex tangent bundle $TW$. Given an embedding of orbifold charts $\lambda:W \to W'$ as in \eqref{f1}, the holomorphic map $\lambda$ lifts to a homomorphism of the holomorphic tangent bundles which is compatible with the homomorphism $\Gamma \to \Gamma'$ of finite groups.

The above observation gives rise to the definition of a holomorphic orbifold bundle on a complex orbifold $\ul X$.
\begin{definition}\label{de-bs}
   Let $\underline{X}$ be a complex orbifold with a system of orbifold charts $(W_i,\Gamma_i, \varphi_i)_{i\in I}$. A holomorphic orbifold vector bundle on $\underline{X}$ is defined by a system $E_i$ of\/ $\Gamma_i$-equivariant holomorphic vector bundles on $W_i$. Furthermore for every embedding
$$
(W_i,\Gamma_i, \varphi_i) \hookrightarrow  (W_j,\Gamma_j, \varphi_j)
$$
with holomorphic maps $\lambda_{ji}:W_i\to W_j$, there is a holomorphic isomorphism $E_i \to \lambda^*_{ji} E_j$ compatible with the equivariant actions of $\Gamma_i$ and $\Gamma_j$ and the group homomorphism $\Gamma_i \to \Gamma_j$.

Orbifold sheaves are defined in an analogous way.
\end{definition}

If $(W,\Gamma,\varphi)$ is an orbifold chart, and $E$ is a $\Gamma$-equivariant holomorphic vector bundle on $W$ of rank $r$, then $\Gamma$-invariant holomorphic sections of $W$ are known to give rise to a coherent torsion free analytic sheaf on the quotient $W/\Gamma$.
By our assumption on the isotropy groups of general points, the rank of the sheaf of invariant sections may be strictly smaller than the rank of $E$. This can happen, if the action of $\Gamma$ on $W$ is trivial but not on $E$. (There exist obvious trivial examples of this kind).

Note that notions like ``orbifold holomorphic differential forms'' or ``vector fields'' always refer to objects (and sheaves) on $\ul X$ that are defined in terms of holomorphic orbifold vector bundles.

Starting from the equivariant index theorem of Atiyah and Singer, in \cite{kawasakiRR} (cf.\ also \cite{kawasakiST}), Kawasaki proved the Riemann-Roch-Hirzebruch theorem for holomorphic V-vector bundles on V-manifolds.

By definition, a holomorphic section of a holomorphic orbifold vector bundle over an open subset of the underlying space $X$ is given in terms of orbifold charts $(W_j,\Gamma_j, \varphi_j)$ as $\Gamma_j$-invariant holomorphic sections on the spaces $W_j$ compatible with the gluing data. In a similar way differentiable sections of orbifold vector bundles (in the differentiable category) are defined. The orbifold tangent bundle, and its complexification together with the type decomposition of its exterior products yield further examples. Accordingly orbifold differential forms (holomorphic or differentiable) are defined with respect to orbifold charts $(W_j,\Gamma_j,\varphi_j)$ as $\Gamma_j$-invariant forms. The sheaf of (invariant) holomorphic orbifold $p$-forms defines a coherent sheaf $\Omega^p_{\ul X}$ on $X$. In a similar way the sheaf $End(\ul E)$ of orbifold endomorphisms is defined.

If $E$ is a holomorphic orbifold vector bundle, then $\Omega^p_{\ul X}(E)$ denotes the coherent sheaf on the underlying space $X$ that is defined by $E$-valued holomorphic $p$-forms on orbifold charts, equivariant under the respective group actions on orbifold charts. In particular $\cO_{\!\ul X}(E)$ denotes a coherent $\cO_X$-module.

From now on, we assume that the actions of the groups $\Gamma_j$ are {\em effective} so that the rank of the orbifold vector bundle $E$ on $\ul X$ is equal to the rank of the coherent sheaf $\cO_{\ul X}(E)$  -- such orbifold vector bundles are also called proper. Also the orbifold bundle $\det(E)=\Lambda^r(E)$ for $r=\text{rk}(E)$ defines a coherent sheaf on $X$. Let
$$
0 \to \Omega^p_{\ul X}(E) \to \cA^{p,0}_{\ul X}(E) \to \ldots \to\cA^{p,n}_{\ul X}(E)   \to 0
$$
be the orbifold Dolbeault complex of sheaves on $X$. It defines a fine resolution of the coherent sheaf $\Omega^p_{\ul X}(E)$ on $X$. We will denote the coherent sheaf $\cO_{\ul X}(E)$ on $X$ also simply by the same letter $E$, if there is no confusion.

\begin{definition}
A {\em holomorphic family} of holomorphic orbifold bundles over a complex space $S$ is an orbifold vector bundle $\cE$ over the orbifold $\ul X \times S$ (with orbifold structure induced by $\ul X$). The fibers $\cE_s$ for $s\in S$ are the restrictions of $\cE$ to $\ul X \times \{s\}$. A {\em deformation} of an orbifold vector bundle $E$ on $\ul X$ over a parameter space $S$ with base point $s_0\in S$ consists of a holomorphic orbifold vector bundle $\cE$ over $\ul X \times S$ together with an isomorphism $E \stackrel{\sim}{\longrightarrow} \cE_{s_0}$. Again, if no confusion is possible, we will denote the sheaf of holomorphic sections of $\cE$ by the same letter.
\end{definition}

In a similar way, the de Rham complex is defined on orbifolds. Since the ``constant orbifold sheaf'' $\R$ descends to $X$ as the constant sheaf $\R$ on the underlying normal space $X$, the orbifold de Rham cohomology is identified with the cohomology of the constant sheaf $\mathbb R$ on the underlying space $X$.

Hermitian metrics on holomorphic orbifold bundles $E$ are by definition hermitian structures of class $\cinf$ on the respective uniformizing systems $(W, \Gamma, \varphi)$ that are invariant under the action of the group $\Gamma$ and compatible with transition maps of orbifold charts in the sense of Definition~\ref{de-bs}. Even if the sheaf $\cO_\ul X(E)$ of invariant sections of an orbifold bundle $E$ is locally free, the induced hermitian structure on $\cO_\ul X(E)$ need only be continuous. Orbifold hermitian metrics can be constructed from invariant metrics on the orbifold charts by means of differentiable cut-off functions (after replacing the underlying space by an open relatively compact subspace, if necessary).

Since both the de Rham and Dolbeault complexes, as well as type decompositions of orbifold differential forms, are meaningful, there is the notion of a {\em \ka\ orbifold}. We denote a \ka\ orbifold form by
$$
\omega_X=\frac{\ii}{2} g_{\alpha\ol\beta}(z)\; dz^\alpha\wedge dz^\ol\beta,
$$
where $(z^1,\ldots,z^n)$ are holomorphic coordinates on a local chart $W$. Observe that local $\pt\ol\pt$-potentials for \ka\ forms on orbifolds descend to the underlying space as continuous functions. The theory of elliptic operators on orbifolds was developed in \cite{kawasakiRR}.

\section{Stable and Hermite-Einstein bundles on orbifolds}
Integration of orbifold differential forms is defined as follows: For any local uniformizing system $(W,\Gamma,\varphi)$ with $U := W/\Gamma$, and for any orbifold differential form $\eta$ of top degree on $\underline X$, define the integral
\begin{equation}\label{eq:locint}
   \int_U \eta := \frac{1}{m}\int_W \eta,
\end{equation}
where $m$ is the order of $\Gamma$. In order to define the integral of an orbifold form over the whole space $X$, a partition of unity on the underlying space $X$ with respect to the covering $\{U_i\}$ is being used together with the above formula \eqref{eq:locint}.

The Chern classes of a holomorphic orbifold bundle $E$ are defined in terms of hermitian (orbifold) metrics $h$ following Chern-Weil theory. Also the first Chern class of the determinant bundle of a torsion-free orbifold sheaf $\det(\cE)$ is well-defined. In case it only exists as a line bundle after taking a suitable tensor power, the Chern class may only
be defined over the rationals (and not over the integers).  The degree $\deg(\cE)$ with respect to a \ka\ form $\omega_X$ can be defined in the usual way by
$$
\deg(E) := \int_X c_1(\det(\cE)){\omega^{n-1}_X},
$$
where $n$ is the complex dimension of $X$. (We set $\omega^{k}_X=\omega_X\wedge\ldots \wedge \omega_X/k!$ for any $k$.)

Henceforth, we assume that the complex orbifold $\underline X$ is equipped with a \ka\ form $\omega_X$.

Since the slope (i.e.\ the quotient ${\rm degree}/{\rm rank}$) of a torsion-free coherent analytic sheaf is now defined, it follows immediately that $\mu$-stable and $\mu$-semistable sheaves are well-defined. Again stable bundles are simple; the space of holomorphic endomorphisms is defined in terms of the orbifold structure, meaning endomorphisms, by definition, commute with the action of the groups.

Sobolev spaces for orbifolds exist and are defined in terms of the local uniformizing systems, and the methods of Donaldson \cite{do} and Uhlenbeck-Yau \cite{uhy} are applicable: {\em Any stable vector bundle on a compact \ka\ orbifold possesses a unique orbifold \he\ connection}.

The classical construction of universal deformations of simple vector bundles on complex manifolds (cf.\ \cite{forster}) can be carried over literally. If $S$ is a parameter space, the trivial orbifold structure on $S$ provides $\ul X\times S$ with a natural orbifold structure.

\begin{remark}\label{pr:fambdl}
{\rm For a reduced complex space $S$, a holomorphic family of orbifold bundles $\{\cE_s\}_{s\in S}$ parameterized by $S$ is given by a holomorphic orbifold bundle $\cE$ on the orbifold $\ul X\times S$ such that $\cE_s=\cE\vert_{\ul X\times \{s\}}$.}
\end{remark}

Using the implicit function theorem for Sobolev spaces one can show that given a holomorphic family of stable vector bundles on a compact \ka\ orbifold, the irreducible \he\ connection extends uniquely from a fiber to a differentiable family of \he\ connections.

Later we will consider moduli of orbifold vector bundles $E$ with a {\em fixed orbifold determinant bundle} $\det(E)$ on $\ul X$ (whose sections define a coherent sheaf on the underlying space $X$).

We call a \ka\ orbifold $(\ul X,  \omega_X)$ to be of {\em Hodge type}, if it possesses a hermitian holomorphic orbifold line bundle $(L,k)$, whose (orbifold) Chern form is a positive multiple of the \ka\ form $\omega_X$. According to Baily's theorem \cite{bai}, invariant sections of a power of the line bundle $L$ provide a projective embedding of the underlying normal space $X$.

\section{Positive line bundles on moduli spaces of stable bundles}
By a ``moduli space'' $\cM$ we will always denote a moduli space of stable orbifold vector bundles of {\em fixed} orbifold determinant line bundle $L$ (or more precisely a connected component). However, most of the following statements concerning holomorphic families are valid without this assumption, which will be needed in the global statement of Proposition~\ref{pr:fibintendo}.

For any holomorphic family of holomorphic orbifold vector bundles $\cE_s$ of rank $r$ there exists a number $m\in \mathbb N$ such that $(\det(\cE_s))^m$ is an invertible $\cO_X$-module. Hence such a number exists uniformly for any component of the moduli space $\wt\cM$ of stable orbifold bundles, where the determinant need not be fixed. This fact implies the existence of a finite surjective map
$$
Pic^0(X) \times \cM \to \wt \cM,
$$
which motivates the extra condition. When dealing with the \wp\ current, we will assume that the {\em (fixed) determinant of the given holomorphic orbifold bundles} representing points of $\cM$ is an {\em invertible $\cO_X$-module}.

\subsection{Determinant line bundle and Quillen metric}\label{se:quill}
In this section we will introduce a natural \ka\ structure on the moduli space of stable holomorphic orbifold vector bundles using a certain determinant line bundle equipped with a Quillen metric. This \ka\ form will be called the \textit{generalized Weil-Petersson form}. We will see that its construction is functorial with respect to the base change of families.

Let $\{\cE_s\}_{s\in S}$ be a holomorphic family of holomorphic orbifold vector bundles on $\ul X$ in the sense of Remark~\ref{pr:fambdl} with fixed determinant orbifold line bundle. Let $\{H_s\}$ be a $C^\infty$ family of (orbifold) \he\ metrics. By definition, we have a hermitian metric $H$ on the complex orbifold vector bundle $\cE$ which is invariant under the actions of the groups. Let $F=F(\cE,H)$ be the curvature form of the unique hermitian connection on $\cE$ compatible with the holomorphic structure, and set
$$
\cK(\cE,H)= \frac{\ii}{2\pi} F(\cE, H).
$$
It is a $d$-closed, real orbifold $(1,1)$-form with values in the orbifold vector bundle $\text{End}^0(\cE)\subset \text{End}(\cE)$, since the determinant is being fixed. The \he\ condition reads
$$
\Lambda(\ii F|\cE_s) = \lambda \cdot {\rm id}_{\cE_s},
$$
where $\Lambda$ is the dual of the multiplication with $\omega_X$ fiberwise.

Given any tangent vector $v\in T_s S$, we denote its lift as vector field on $\ul X\times S$ along \hbox{$\ul X\times\{s\}$} by the same letter $v$. For a manifold $X$, the contraction $v \;\Big\lrcorner\; \cK(\cE,H)\vert_{\ul X \times \{s\}}$ is a $\ol\pt$-closed $(0,1)$-form on $X$ with values in ${End}(\cE_s)$ that represents the \ks\ class of $v$ \cite{s-toma}. In the orbifold case, it follows that we get an orbifold form in this way, which fits into the resolution
$$
\textit{End}(\cE_s)\to\cA^{0,\bullet}_\ul X(\textit{End}(\cE_s))
$$
of certain sheaves on the underlying space $X$.

The classical computation of infinitesimal deformations applies literally, and we see that the \ks\ map takes values in the first orbifold cohomology $H^1(\ul X,\textit{End}(\cE_s))$. We denote the \ks\ map by
\begin{equation}\label{eq:ks}
   \rho: T_sS \to H^1(\ul X,\textit{End}(\cE_s)) .
\end{equation}
It should be emphasized that the orbifold cohomology $H^1(\ul X,\text{End}(\cE_s))$ in \eqref{eq:ks} is the cohomology of the orbifold endomorphism bundle on $X$ which we compute by means of the orbifold Dolbeault complex.

\he\ metrics on the vector bundles $\cE_s$ together with the orbifold \ka\ structure on $\ul X$  induce natural inner products on the sections of $\cA^{p,q}_\ul X(\textit{End}(\cE_s))$. In \cite{kawasakiRR, kawasakiST} the corresponding theory of elliptic operators and harmonic sections was developed. The inner product of harmonic representatives of \ks\ classes induce a natural inner product on $T_sS$, which is called generalized \wp\ metric. We recall details.

Let $v\in T_sS$ be a tangent vector. Let $A\in \cA^{0,1}_\ul X(\textit{End}(\cE_s))(\ul X)$ be the harmonic representative of $\rho(v)$.

The generalized \wp\ norm of $v$ is given by
$$
\| v\|^2_{WP}:= \int_X \vert A\vert^2(z) g dV,
$$
where $gdV$ is the \ka\ volume form.

Like in {\cite[Proposition 1]{s-toma}}, we have
\begin{equation}\label{eq:harm}
   A=2\pi\; v \;\Big\lrcorner \;\cK(\cE,H)\vert_{\ul X\times\{s\}} .
\end{equation}

The proof of the following proposition holds in the orbifold case.
\begin{proposition}[{cf.\ \cite[Proposition~1]{s-toma}\label{pr:fibint}}]
The \wp\ form satisfies the following fiber integral formula:
\begin{gather}\label{eq:fibint}
   \frac{1}{2\pi^2}\omega^{WP}= - \int_{(X\times S)/S} {\rm tr} \left(\cK(\cE,H)\wedge \cK(\cE,H)\right)\wedge{\omega^{n-1}} \\ \nonumber  \hspace{5cm} + \frac{\lambda}{\pi}\int_{(X\times S)/S} {\rm tr}( \cK(\cE,H)) \wedge \omega^{n},
\end{gather}
where $\omega$ denotes the pull-back of the \ka\ form $\omega_X$ to $\ul X\times S$, and
$$
\lambda= \frac{2 \pi c_1(\cE_s)[X]}{r\cdot{\rm vol}(X)}
$$
(independent of $s$).

After applying a conformal factor that only depends upon the parameter $s\in S$ to the family $H$ of \he\ metrics we have (locally with respect to $S$)
\begin{equation}\label{eq:fibint_simpl}
   \frac{1}{2\pi^2}\omega^{WP}= - \int_{X\times S/S} {\rm tr} \left(\cK(\cE,H)\wedge \cK(\cE,H)\right)\wedge{\omega^{n-1}}.
\end{equation}

\end{proposition}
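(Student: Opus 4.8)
The plan is to reduce the statement to a pointwise computation on the local uniformizing systems of $\ul X\times S$ and then integrate, following \cite[Proposition~1]{s-toma}. All the data involved — the curvature form $\cK(\cE,H)$, the \ka\ form $\omega_X$, the fibrewise \he\ metrics, and the harmonic \ks\ representatives — live on the charts $(W_i,\Gamma_i,\varphi_i)$ in a $\Gamma_i$-invariant way and are compatible with the injections of charts; the Hodge theory needed to speak of harmonic representatives in the orbifold setting is that of \cite{kawasakiRR, kawasakiST}; and the fibre integral $\int_{(X\times S)/S}$ is assembled chart by chart with the $1/|\Gamma_i|$ normalisation of \eqref{eq:locint}. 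So it suffices to prove the identity on one chart $W\times S$, where it is the smooth statement. First I would rewrite the left-hand side of \eqref{eq:fibint} using the definition of $\omega^{WP}$ together with \eqref{eq:harm}: for $v,w\in T_sS$ its value is, up to the sign conventions, the $L^2$-pairing $\int_X\langle A_v,A_w\rangle\,g\,dV$ of the \emph{harmonic} \ks\ representatives, and \eqref{eq:harm} identifies $A_v=2\pi\,v\,\lrcorner\,\cK(\cE,H)|_{\ul X\times\{s\}}$, a $(0,1)$-form on $X$ with values in $\textit{End}(\cE_s)$.

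The computational core is to extract from $\mathrm{tr}(\cK\we\cK)\we\omega^{n-1}$ the part that survives fibre integration over $X$. Writing $\cK=\cK(\cE,H)$ and splitting it according to the product structure $W\times S$ into the fibrewise part $G$ (type $(1,1)$ in the $W$-variables, equal on $\ul X\times\{s\}$ to $\tfrac{\ii}{2\pi}F(\cE_s,H_s)$), the two mixed parts $\phi$ and $\psi=\bar\phi$ (types $(1,0)_W\we(0,1)_S$ and $(0,1)_W\we(1,0)_S$), and the base part $B$ (type $(1,1)$ in the $S$-variables), one checks by bidegree counting that the only pieces of $\cK\we\cK$ which, after wedging with the $(n-1,n-1)$-form $\omega^{n-1}$ and integration over the $n$-dimensional fibre, yield a $(1,1)$-form on $S$ are $2\,\mathrm{tr}(G\we B)$ and $2\,\mathrm{tr}(\phi\we\psi)$. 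For the second, $\phi$ and $\psi$ are built precisely from $v\,\lrcorner\,\cK$, and by the elementary pointwise \ka\ identity expressing $\langle a,a\rangle\,g\,dV$ for a $(0,1)$-form $a$ as $\pm\ii\,a\we\bar a\we\omega^{n-1}/(n-1)!$, this term reproduces $\int_X\langle A_v,A_w\rangle\,g\,dV$, i.e.\ the \wp\ integrand. For the first, $G\we\omega^{n-1}/(n-1)!=(\Lambda G)\,\omega^{n}/n!$, and here the \he\ equation $\Lambda(\ii F|\cE_s)=\lambda\cdot\mathrm{id}$ replaces $\Lambda G$ by $\tfrac{\lambda}{2\pi}\mathrm{id}$, so that $\mathrm{tr}(G\we B)\we\omega^{n-1}$ contributes a multiple of $\lambda\int_{(X\times S)/S}\mathrm{tr}(\cK)\we\omega^{n}$ (the form $\mathrm{tr}(\cK)\we\omega^n$ isolating exactly the $(1,1)_S$-part of $\mathrm{tr}(\cK)$). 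Matching the constants — the normalisation $\cK=\tfrac{\ii}{2\pi}F$, the factor $2\pi$ in \eqref{eq:harm}, the scalar $\lambda=2\pi c_1(\cE_s)[X]/(r\,\mathrm{vol}(X))$, and the \ka\ identity — yields \eqref{eq:fibint} with its prefactors $\tfrac{1}{2\pi^2}$, $\tfrac{\lambda}{\pi}$ and overall sign; this last part is pure bookkeeping, carried out as in \cite{s-toma}.

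For the simplified formula \eqref{eq:fibint_simpl} I would argue as follows. The left-hand side of \eqref{eq:fibint} is unchanged under $H\mapsto e^{f(s)}H$ with $f$ depending only on $s\in S$: the \he\ metric on each connected fibre $\cE_s$ is unique up to a positive constant, and the induced metric on $\textit{End}(\cE_s)$ and the Laplacian on $\textit{End}(\cE_s)$-valued $(0,1)$-forms are unaffected by such a rescaling, hence so is the generalized \wp\ metric on $T_sS$. On the other hand $\cT(H):=\int_{(X\times S)/S}\mathrm{tr}(\cK(\cE,H))\we\omega^{n}$ is a real $(1,1)$-form on $S$ — only the $(1,1)_S$-component of $\mathrm{tr}(\cK)$ survives wedging with the fibrewise top form $\omega^n$ — and it is $d$-closed, being the fibre integral over the closed fibre $X$ of the $d$-closed form $\mathrm{tr}(\cK)\we\omega^n$; on a local smooth ambient space of $S$ it therefore equals $\pt\ol\pt_S u$ for a real function $u$. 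Under $H\mapsto e^{f(s)}H$ the trace $\mathrm{tr}(\cK)$ changes by the fibre-constant, purely $(1,1)_S$-form $\tfrac{\ii}{2\pi}r\,\pt\ol\pt_S f$, so $\cT(H)$ changes by $\mathrm{vol}(X)\cdot\tfrac{\ii}{2\pi}r\,\pt\ol\pt_S f$; choosing $f$ locally on $S$ so that this shift cancels $\cT(H)$ makes $\cT$ vanish identically. With that choice the second summand on the right of \eqref{eq:fibint} drops out and \eqref{eq:fibint} becomes \eqref{eq:fibint_simpl}.

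I expect the only genuine obstacle to be the bidegree analysis together with the appearance of the \he\ equation in the $\mathrm{tr}(G\we B)$ term of the second paragraph: one must see that, among all components of $\mathrm{tr}(\cK\we\cK)\we\omega^{n-1}$, exactly the mixed term $\mathrm{tr}(\phi\we\psi)$ gives back the \wp\ integrand and exactly the fibrewise-times-base term $\mathrm{tr}(G\we B)$ produces the single $\lambda$-correction, with no further contributions. Everything else — the elementary \ka\ pointwise identity, the matching of constants, and the orbifold adaptation (working $\Gamma_i$-equivariantly on charts, with fibre integrals normalised by $1/|\Gamma_i|$ and harmonic representatives furnished by Kawasaki's elliptic theory) — is routine once that identity is in place.
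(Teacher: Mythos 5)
Your proof of the main formula \eqref{eq:fibint} is essentially the paper's: the paper performs exactly your bidegree analysis, only in local coordinates, writing ${\rm tr}(-\cK\we\cK)$ in terms of $F_{\alpha\ol\beta},F_{i\ol\jmath},F_{i\ol\beta},F_{\alpha\ol\jmath}$ (your $G$, $B$, $\psi$, $\phi$), recovering the \wp\ integrand from the mixed term via \eqref{eq:harm} and the $\lambda$-correction from the $G\we B$ term via the \he\ equation, with the orbifold aspect disposed of by the same chart-by-chart normalization remark. Where you genuinely diverge is the passage to \eqref{eq:fibint_simpl}. The paper proves the pointwise statement that ${\rm tr}(F_{i\ol\jmath})$ is fiberwise harmonic --- via the commutation identity $g^{\ol\beta\alpha}F_{i\ol\jmath;\ol\beta\alpha}=g^{\ol\beta\alpha}\bigl(F_{\alpha\ol\beta;i\ol\jmath}+[F_{i\ol\beta},F_{\alpha\ol\jmath}]\bigr)$, whose trace vanishes after contraction because of the \he\ equation --- hence constant on each fiber, hence a function of $s$ admitting a local $\pt\ol\pt$-potential $u$, and then rescales $H$ by $e^{-u/r}$. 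You instead note that $\cT(H)=\int_{X\times S/S}{\rm tr}(\cK)\we\omega^{n}$ is a $d$-closed real $(1,1)$-form on $S$, so it has a local potential by the local $\pt\ol\pt$-lemma, and that $\omega^{WP}$ itself is insensitive to $H\mapsto e^{f(s)}H$ because the induced metric on $\textit{End}(\cE_s)$ is; choosing $f$ to annihilate $\cT$ then removes the second summand of \eqref{eq:fibint} without your having to track how the first summand changes. Your route is more elementary (no Bianchi-type computation) and is logically complete for the proposition as stated; the paper's route yields the slightly stronger fact that ${\rm tr}(F_{i\ol\jmath})$ itself, and not merely its fiber integral, depends only on $s$, which is what justifies its phrase that the rescaling ``affects only the last term.'' Both arguments are correct, and the remaining constant-matching you defer to \cite{s-toma} is treated at the same level of detail in the paper.
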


\begin{proof}
We may assume that $S$ is smooth. In terms of local coordinates $z^\alpha$ on $X$ and $s^i$ on $S$, we have
\begin{gather*}
{\rm tr}(-\cK(\cE,H)\we\cK(\cE,H))= \nonumber \hspace{5cm}\\ \frac{1}{2\pi^2} {\rm tr} (-F_{\alpha\ol\beta}F_{i\ol\jmath}+ F_{i\ol\beta}F_{\alpha\ol\jmath})\;\ii dz^\alpha\we dz^{\ol\beta}\; \ii ds^i\we ds^{\ol\jmath}.
\end{gather*}
so that
\begin{gather*}
  \int_{X\times S/S}{\rm tr}(-\cK(\cE,H)\we\cK(\cE,H))\we\omega^{n-1} = \hspace{5cm}\\
  \frac{1}{2\pi^2} \omega^{WP} - \frac{\lambda}{2\pi^2} \int_{X\times S/S} {\rm tr}(F_{i\ol\jmath})\omega^n \;\ii ds^i\we ds^{\ol\jmath}.
\end{gather*}
Now
$$
g^{\ol\beta\alpha}F_{i\ol\jmath;\ol\beta\alpha} = g^{\ol\beta\alpha}F_{i\ol\beta;\ol\jmath\alpha}=
g^{\ol\beta\alpha}(F_{\alpha\ol\beta;i \ol\jmath} + [F_{i\ol\beta},F_{\alpha,\ol\jmath}])
$$
implying that for all $s\in S$ the fiberwise Laplacian of the trace vanishes:
$$
\Box_s {\rm tr}( F_{i\ol\jmath})=0.
$$
So ${\rm tr} (F_{i\ol \jmath})$ only depends on $s\in S$, and it possesses a (local) $\pt\ol\pt$-potential $u$. We replace the family of hermitian metrics $H$ by $e^{-u/r}H$, where $r$ is the rank of $\cE$. This affects only the last term in the fiber integral formula \eqref{eq:fibint} for the \wp\ metric, which now vanishes.
\end{proof}

Again the proof is literally the same in the orbifold case.

It follows immediately from the definition \eqref{eq:locint} that $\omega^{WP}$ is of class $\cinf$, when $S$ is smooth. Note that both sides of \eqref{eq:fibint_simpl} are defined on the Zariski tangent spaces of $S$ and the equality holds there.
\begin{proposition}\label{propi1}
   The generalized \wp\ form is a $d$-closed real $(1,1)$-form on the parameter space $S$.

   It is strictly positive on any complex tangent space $T_sS$.

   When $S$ is reduced but possibly singular, it possesses a local $\pt\ol\pt$-potential of class $C^\infty$.

   The construction of the generalized \wp\ form is compatible with base change: For $\psi : W\to S$, the \wp\ form for the pull back to $X\times W$ of the given family of vector bundles coincides with the form $\psi^* \omega^{WP}$.
\end{proposition}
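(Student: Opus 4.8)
The plan is to derive the first three assertions from the fibre integral formula of Proposition~\ref{pr:fibint} and the identity \eqref{eq:harm}, and the fourth from functoriality of the construction. For the first assertion, recall that $\cK(\cE,H)=\frac{\ii}{2\pi}F(\cE,H)$ is a real, $d$-closed orbifold $(1,1)$-form on $\ul X\times S$ by Chern--Weil theory, and that $\omega$, the pull-back of the \ka\ form $\omega_X$, is real, closed and of type $(1,1)$. Hence the integrand $\mathrm{tr}(\cK(\cE,H)\we\cK(\cE,H))\we\omega^{n-1}$ on $\ul X\times S$ is real and $d$-closed. The orbifold fibre integral over $X$ --- defined by \eqref{eq:locint} and a partition of unity on $X$, and therefore obeying Stokes' theorem chart by chart --- commutes with $d$, so the right-hand side of \eqref{eq:fibint_simpl} is a real, $d$-closed form on $S$; counting bidegrees, or reading off the $ds^i\we ds^{\ol\jmath}$ part left after fibre integration in the coordinate expression of the proof of Proposition~\ref{pr:fibint}, shows it is of type $(1,1)$. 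The same applies to \eqref{eq:fibint}, and the conformal rescaling of $H$ used to pass to \eqref{eq:fibint_simpl} leaves $\omega^{WP}$ unchanged. For smooth $S$ the form is of class $\cinf$ by \eqref{eq:locint}.

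\emph{Positivity.} Evaluating \eqref{eq:fibint_simpl} at $v,\ol v$ for $v\in T_sS$ and using \eqref{eq:harm}, the form $A:=2\pi\,(v\lrcorner\cK(\cE,H)\vert_{\ul X\times\{s\}})$ is the harmonic representative of the \ks\ class $\rho(v)$, and the coordinate computation in the proof of Proposition~\ref{pr:fibint} identifies the resulting value with $\frac{1}{2\pi^2}\|v\|^2_{WP}=\frac{1}{2\pi^2}\int_X\vert A\vert^2\,g\,dV\ge 0$. Thus $\omega^{WP}$ is positive semi-definite on $T_sS$, and $\omega^{WP}(v,\ol v)=0$ forces $A=0$, i.e.\ $\rho(v)=0$; so $\omega^{WP}$ is strictly positive exactly where the \ks\ map \eqref{eq:ks} is injective --- in particular on the moduli space $\cM$, where $\rho$ is an isomorphism.

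\emph{Base change.} For $\psi\colon W\to S$ set $\cE_W:=(\mathrm{id}_{\ul X}\times\psi)^*\cE$ and $H_W:=(\mathrm{id}_{\ul X}\times\psi)^*H$. Since $(\cE_W)_w=\cE_{\psi(w)}$ as hermitian orbifold bundles, $H_W$ is again a $\cinf$ family of \he\ metrics, and the Chern connection and its curvature pull back: $\cK(\cE_W,H_W)=(\mathrm{id}_{\ul X}\times\psi)^*\cK(\cE,H)$. Because $\ul X\times W=(\ul X\times S)\times_S W$, the fibre integral in \eqref{eq:fibint} satisfies the projection formula, so its right-hand side pulls back and $\omega^{WP}_{\cE_W}=\psi^*\omega^{WP}_{\cE}$; for singular $W$ this is read off on Zariski tangent spaces, where both sides of \eqref{eq:fibint_simpl} already live.

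\emph{The $\cinf$ local $\ddb$-potential on a reduced singular base} is the hard step, and I expect it to be the main obstacle. The strategy is to reduce it to the smooth case. By the base change just proved and the (semi-)universal property of deformations, it suffices to treat the case in which $S$ is the base of the (semi-)universal deformation of a fibre $\cE_{s_0}$, realised as an analytic subspace of a ball $\Omega\subset H^1(\ul X,\textit{End}(\cE_{s_0}))$: a general family near $s_0$ is pulled back from it along a map $\psi$ that extends holomorphically to the ambient smooth spaces, so a $\cinf$ potential on $\Omega$ pulls back to one on an ambient space of $S$. Over $\ul X\times\Omega$ the universal family extends as the $\cinf$ orbifold bundle $\mathrm{pr}_{\ul X}^*\cE_{s_0}$, and since the irreducible \he\ connection extends differentiably (implicit function theorem for Sobolev spaces, as recalled earlier) so do the \he\ metrics and the form $\cK(\cE,H)$. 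One then rewrites the integrand of \eqref{eq:fibint_simpl} by a Bott--Chern transgression against a fixed reference metric, writing $\mathrm{tr}(\cK\we\cK)\we\omega^{n-1}=\ddb(\text{secondary form})+(\text{reference term})$, fibre-integrates, and uses that fibre integration commutes with $\ddb$ together with elliptic regularity to conclude $\omega^{WP}=\ddb\Phi$ with $\Phi\in\cinf(\Omega)$. The delicate point --- and the reason this is the real obstacle --- is that $\mathrm{tr}(\cK\we\cK)$ a priori lives only on $\ul X\times S$, not on $\ul X\times\Omega$, since $\cE$ does \emph{not} extend holomorphically over $\ul X\times\Omega$; one must verify that the secondary form still extends $\cinf$-ly when expressed through the extended metric and connection data, and that the reference term contributes a $\ddb$-exact piece on $\Omega$. (When $\ul X$ is of Hodge type one may instead take for $\Phi$ the negative logarithm of the squared norm of a local holomorphic frame of the determinant line bundle in its Quillen metric, using the curvature formula of Theorem~\ref{th:bgs} and the $\cinf$-dependence of the Quillen metric on the parameters; this is not needed for the present statement.) Throughout, the manifold case is \cite[Proposition~1]{s-toma}, whose proof carries over to orbifolds verbatim.
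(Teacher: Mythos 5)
Your treatment of closedness, positivity and base change matches the paper's argument: $d$-closedness follows because the integrand is a $d$-closed real $(n+1,n+1)$-form and the orbifold fibre integral commutes with $d$; functoriality follows from the uniqueness of the \he\ connection together with base-change compatibility of the integrand. Your discussion of positivity is in fact more careful than the paper's one-line ``follows from its definition'': you correctly isolate that strict positivity on $T_sS$ amounts to injectivity of the \ks\ map \eqref{eq:ks}, which is the implicit hypothesis behind the statement and holds on the moduli space.

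The genuine gap is exactly where you place it: the $C^\infty$ local $\pt\ol\pt$-potential when $S$ is reduced and singular. Your proposed reduction to a smooth ambient ball $\Omega$ via the semi-universal deformation founders on the point you flag yourself: the holomorphic family $\cE$, and with it the holomorphic structure entering $\cK(\cE,H)$ and the Bott--Chern transgression, lives only over $\ul X\times S$ and does not extend over $\ul X\times\Omega$; hence the secondary form you want to fibre-integrate is simply not defined on the ambient space, and extending only the metric and connection data differentiably does not by itself produce a $\ddb$-exact representative there. The paper does not attempt any such construction: it invokes \cite[Theorem 10.1 and \S~12]{f-s}, i.e.\ the Fujiki--Schumacher machinery for fibre integrals of locally $\ddb$-exact forms over reduced, possibly singular base spaces, which yields the $C^\infty$ potential on a local smooth ambient space directly. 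So you should either quote that result, as the paper does, or actually supply the missing extension argument; as written, the third assertion of the proposition remains unproved, and the remaining three are established essentially as in the paper.
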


\begin{proof}
Using a partition of unity, the integral is evaluated locally on the domains $W\times S$ of orbifold charts $(W\times S, \Gamma, \varphi \times id_S)$ (and the results are divided by the order of the groups $\Gamma$). This fact implies the differentiability of the result of the fiber integration. The \ka\ property of the \wp\ form follows from \eqref{eq:fibint_simpl}. Indeed, the $d$-closedness of the real $(1,1)$-form $\omega^{WP}$ is a consequence of the closedness of the integrand, which is evidently a $d$-closed real $(n+1,n+1)$-form.

The positivity of $\omega^{WP}$ on a Zariski tangent space of the base follows from its definition.

The existence of a local potential in the case of a singular, reduced parameter space follows from \cite[Theorem 10.1 and \S~12]{f-s}.

Since the \he\ connection on a stable vector bundle is unique, the assignment of a harmonic representative of a \ks\ class to a tangent vector is functorial. Also the integrand of the fiber integral in \eqref{eq:fibint_simpl} is compatible with the base change of homomorphisms.
\end{proof}

Equation \eqref{eq:harm} can be interpreted as the first variation of the \he\ form in a holomorphic family.

We state Proposition~\ref{pr:fibint} in terms of the Chern character form of the endomorphism bundle. The induced metric on $End(\cE)$ is denoted by the same letter $H$.
\begin{proposition}\label{pr:fibintendo}
\begin{gather}
\frac{1}{2\pi^2}\omega^{WP}= -\frac{1}{r}\int_{X\times S/S} ch_2(End(\cE),H)\we \omega^{n-1} \nonumber\hspace{4cm}\\
\hspace{1cm}+\frac{1}{r \pi^2}\left(\int_{X\times S/S} {\rm tr}(F_{i\ol\beta}){\rm tr}(F_{\alpha\ol\jmath})g^{\ol\beta\alpha} \omega^n \right) \ii ds^i\we ds^\ol\jmath.\label{eq:fibintendo}
\end{gather}
If the determinant bundles $\Lambda^r(\cE_s)$ are constant, then
\begin{equation}\label{eq:fibintendosimp}
\frac{1}{2\pi^2}\omega^{WP}= -\frac{1}{r}\int_{X\times S/S} ch_2(End(\cE),H)\we \omega^{n-1}.
\end{equation}
\end{proposition}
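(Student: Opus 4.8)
<br>

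The plan is to derive Proposition~\ref{pr:fibintendo} directly from Proposition~\ref{pr:fibint} by re-expressing the integrands appearing in \eqref{eq:fibint} in terms of the endomorphism bundle $\textit{End}(\cE)$. The starting observation is the standard identity for the curvature of $\textit{End}(\cE)=\cE\otimes\cE^*$, namely $F(\textit{End}(\cE),H)$ acts on a local endomorphism $\phi$ by $\phi\mapsto [F(\cE,H),\phi]$; consequently, for the induced metric $H$ on $\textit{End}(\cE)$ one computes
$$
\mathrm{tr}\,\big(F(\textit{End}(\cE),H)\wedge F(\textit{End}(\cE),H)\big)
= 2r\,\mathrm{tr}\big(F(\cE,H)\wedge F(\cE,H)\big) - 2\,\mathrm{tr}\,F(\cE,H)\wedge \mathrm{tr}\,F(\cE,H),
$$
which is the usual relation $ch_2(\textit{End}(\cE)) = 2r\,ch_2(\cE) - c_1(\cE)^2$ at the level of Chern forms (using $ch_2 = \frac12 c_1^2 - c_2$ and the rank-$r$ splitting principle). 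Rewriting this in terms of $\cK(\cE,H) = \frac{\ii}{2\pi}F(\cE,H)$ gives
$$
\mathrm{tr}\big(\cK(\cE,H)\wedge\cK(\cE,H)\big) = \frac{1}{2r}\,\mathrm{tr}\big(\cK(\textit{End}(\cE),H)\wedge\cK(\textit{End}(\cE),H)\big) + \frac{1}{r}\,\Big(\tfrac{\ii}{2\pi}\Big)^2 \mathrm{tr}\,F \wedge \mathrm{tr}\,F .
$$

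First I would substitute this into the first fiber-integral term $-\int_{(X\times S)/S}\mathrm{tr}(\cK\wedge\cK)\wedge\omega^{n-1}$ of \eqref{eq:fibint}, producing the term $-\frac{1}{r}\int ch_2(\textit{End}(\cE),H)\wedge\omega^{n-1}$ (since $ch_2(\textit{End}(\cE),H) = \frac12\mathrm{tr}(\cK(\textit{End}(\cE),H)\wedge\cK(\textit{End}(\cE),H))$ with the sign/normalization conventions of the paper) together with an extra contribution coming from the $\mathrm{tr}\,F\wedge\mathrm{tr}\,F$ piece. Next I would handle the second term of \eqref{eq:fibint}, namely $\frac{\lambda}{\pi}\int \mathrm{tr}(\cK(\cE,H))\wedge\omega^n$; here I would use the \he\ condition $\Lambda(\ii F|\cE_s) = \lambda\,\mathrm{id}_{\cE_s}$ to evaluate $\mathrm{tr}(\ii F)\wedge\omega^n$ pointwise: taking traces gives $\Lambda(\mathrm{tr}(\ii F|\cE_s)) = r\lambda$, and since the relevant $(1,1)$-component along $X$ is $F_{\alpha\ol\beta}$ one gets $g^{\ol\beta\alpha}\mathrm{tr}(\ii F_{\alpha\ol\beta}) = r\lambda$. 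Writing out the wedge products in the local coordinates $z^\alpha$ on $X$ and $s^i$ on $S$ exactly as in the proof of Proposition~\ref{pr:fibint}, one sees that the two extra terms — the $\mathrm{tr}\,F\wedge\mathrm{tr}\,F$ contribution from the first integral and the $\lambda$-term — combine, after using $g^{\ol\beta\alpha}\mathrm{tr}(\ii F_{\alpha\ol\beta}) = r\lambda$, into precisely the single expression $\frac{1}{r\pi^2}\big(\int_{X\times S/S}\mathrm{tr}(F_{i\ol\beta})\mathrm{tr}(F_{\alpha\ol\jmath})g^{\ol\beta\alpha}\omega^n\big)\,\ii\,ds^i\wedge ds^{\ol\jmath}$ of \eqref{eq:fibintendo}. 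This is bookkeeping of constants and index positions, and the main thing to be careful about is tracking the factors of $2\pi$, $\ii$, and $r$ so that the coefficient $\frac{1}{r\pi^2}$ comes out correctly; since the orbifold fiber integral is defined chartwise via \eqref{eq:locint} and divided by group orders, all of these manipulations are purely local on orbifold charts and descend, so no new orbifold subtlety enters beyond what was already used in Proposition~\ref{pr:fibint}.

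Finally, for \eqref{eq:fibintendosimp}: if the determinant line bundles $\Lambda^r(\cE_s)$ are constant in $s$, then after the conformal rescaling of $H$ used to pass from \eqref{eq:fibint} to \eqref{eq:fibint_simpl} (which makes $\mathrm{tr}(F_{i\ol\jmath})$ vanish), the mixed components $F_{i\ol\beta}$ of $\mathrm{tr}\,F$ — i.e. the $(0,1)$-part along $S$ of $\mathrm{tr}\,F(\cE,H)$ restricted to a fiber — also vanish, because $\mathrm{tr}\,F(\cE,H)$ is the curvature of the induced metric on $\det\cE = \Lambda^r\cE$ and constancy of the determinant family together with the normalization forces its $S$-directional components to be zero along each fiber. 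Hence the second term of \eqref{eq:fibintendo} drops out and one is left with \eqref{eq:fibintendosimp}. The only genuine obstacle here is justifying this last vanishing cleanly: one must argue that "constant determinant bundle" for the family, in the precise sense already adopted in the paper (fixed orbifold determinant $L$), implies that the connection on $\det\cE$ is pulled back from a connection on $X$ alone, so that its curvature has no $ds^i$ or $ds^{\ol\jmath}$ legs; once this is granted — and it follows from the way the family with fixed determinant is set up in Section~\ref{se:quill}, where $\cK(\cE,H)$ takes values in $\textit{End}^0(\cE)$ — the simplification is immediate.
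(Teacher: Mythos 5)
Your derivation of \eqref{eq:fibintendo} matches the paper's: both rest on the identity $ch_2(\mathrm{End}(\cE),H)=2r\cdot ch_2(\cE,H)-c_1^2(\cE,H)$ together with $ch(\cE,H)=\mathrm{tr}(\exp(\cK(\cE,H)))$, after which the $\mathrm{tr}F\wedge\mathrm{tr}F$ contribution and the $\lambda$-term from \eqref{eq:fibint} recombine into the second term of \eqref{eq:fibintendo}; this part is fine and is exactly the bookkeeping the paper performs.

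The weak point is your justification of \eqref{eq:fibintendosimp}. What has to vanish is $\mathrm{tr}(F_{i\ol\beta})$, and neither of the reasons you offer establishes this. The conformal rescaling used to pass from \eqref{eq:fibint} to \eqref{eq:fibint_simpl} only normalizes $\mathrm{tr}(F_{i\ol\jmath})$, which is a function of $s$ alone, and says nothing about the mixed components. The appeal to ``$\cK(\cE,H)$ takes values in $\mathrm{End}^0(\cE)$'' proves too much: if $\cK$ were literally trace-free then $\mathrm{tr}(F)$ would vanish identically and both correction terms in \eqref{eq:fibintendo} would be trivially zero, which is not what the proposition asserts --- what is trace-free is the infinitesimal variation, not the curvature form. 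Your intermediate claim that the connection on $\det\cE$ has no $ds$-legs is true but not established as stated; to prove it along your lines you would need to observe that fiberwise $\det H_s$ has harmonic curvature (trace of the \he\ condition), that a metric with harmonic curvature on a fixed line bundle over a compact \ka\ manifold is unique up to a constant, and hence that $\det H$ is a product metric up to factors pulled back from $S$. The paper takes a shorter route: the forms $\mathrm{tr}(F_{i\ol\beta})\,dz^{\ol\beta}$ are harmonic $(0,1)$-forms on each fiber and represent the Kodaira-Spencer classes of the family of determinant line bundles $\{\Lambda^r(\cE_s)\}$ (this is \eqref{eq:harm} applied to that family); constancy of the family kills the classes, and harmonicity then forces the forms themselves to vanish identically. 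Either adopt that argument or supply the uniqueness-of-harmonic-metrics argument; as written the last step does not go through.
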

\begin{proof}
We apply the formula $ch_2(End(\cE),H)= 2r\cdot ch_2(\cE,H) - c^2_1(\cE,H)$ (for any hermitian metric), and the fact that the Chern character form has the representation
$$
ch(\cE,H) = {\rm tr}(\exp(\cK(\cE,H))).
$$
Observe that the forms ${\rm tr}( F_{i\ol\beta})dz^\ol\beta$ are harmonic. They represent the \ks\ classes for the family $\{\Lambda^r(\cE_s)\}$ of line bundles. If the latter is constant, then the ${\rm tr}(F_{i\ol\beta})$ vanish identically.
\end{proof}

\subsection{Relative Riemann-Roch Theorem for hermitian vector bundles over \ka\ manifolds}
Our result depends heavily upon the theorem of Bismut, Gilet and Soul\'e. Given a proper, smooth \ka\ morphism $f:\cZ \to S$ with relative \ka\ form $\omega_{\cZ/S}$ and a hermitian line bundle $(\cF,h)$ on $\cZ$, there exists a Quillen metric $h^Q$ on the determinant line bundle
$$
\lambda=\lambda(\cF):= {\rm det}f_!(\cF)
$$
taken in the derived category satisfying

\begin{theorem}[\cite{bgs}]\label{th:bgs}
The Chern form of the determinant line bundle $\lambda(\cF)$ on the base $S$ is equal to the component in degree two of the following fiber integral.
   \begin{equation}\label{eq:bgs}
      c_1(\lambda(\cF),h^Q)= -\left[\int_{\cZ/S}\textit{td} (\cZ/S,\omega_{\cZ/S})\textit{ch}(\cF,h)\right]^{(2)}
   \end{equation}
 Here $\textit{ch}$ and $\textit{td}$ resp.\ stand for the Chern and Todd character forms resp.
\end{theorem}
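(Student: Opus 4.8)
The statement is the curvature formula of Bismut--Gillet--Soul\'e, so the plan is to follow their heat-kernel approach and deduce it from the local version of the family index theorem together with the transgression formalism of analytic torsion. First I would recall the construction of $\lambda(\cF)$ and its metrics. By Knudsen--Mumford, $\lambda(\cF)=\det f_!(\cF)$ is a line bundle on $S$, well defined even where the direct image sheaves $R^qf_*\cF$ jump in rank, and where $f$ is smooth and these sheaves are locally free it is, up to the sign convention of \cite{bgs}, canonically $\bigotimes_q(\det R^qf_*\cF)^{(-1)^q}$. The metric $h$ on $\cF$ and the relative \ka\ form $\omega_{\cZ/S}$ equip each $H^q(\cZ_s,\cF\vert_{\cZ_s})$ with the $L^2$ inner product on $\Box_s$-harmonic $(0,q)$-forms, hence an $L^2$-metric $h^{L^2}$ on $\lambda(\cF)$; the Quillen metric is $h^Q=h^{L^2}\cdot\exp(-\zeta_s'(0))$, where $\zeta_s$ is the zeta function of the fibrewise Dolbeault Laplacian acting in positive degree (the Ray--Singer analytic torsion of the fibrewise complex).

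Next I would introduce the infinite-rank $\mathbb Z/2$-graded bundle $\cH\to S$ with fibre $\bigoplus_q\cA^{0,q}(\cZ_s,\cF\vert_{\cZ_s})$ and fibrewise operator $D_s=\ol\pt_s+\ol\pt^{\,*}_s$, together with Bismut's superconnection $\mathbb A_t=\nabla+\sqrt t\,D+\tfrac{1}{\sqrt t}\,c(T)$, where $c(T)$ is Clifford multiplication by the curvature $2$-form of the fibration $f$. The even form $\alpha_t:=\mathrm{Tr}_s\big[\exp(-\mathbb A_t^{\,2})\big]$ on $S$ is $d$-closed, with de Rham class independent of $t$ --- the Chern character of the index bundle. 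The two ends of the $t$-line supply the two sides of \eqref{eq:bgs}: as $t\to\infty$, $\alpha_t$ converges to the Chern character form of the finite-dimensional cohomology bundle equipped with $h^{L^2}$, hence, in the relevant degree, to the first Chern form of $\lambda(\cF)$ for that metric; as $t\to 0$, Bismut's local family index theorem --- obtained by a Getzler-type rescaling of the heat kernel of $\Box_s$ --- gives $\lim_{t\to 0}\alpha_t=\int_{\cZ/S}\textit{td}(\cZ/S,\omega_{\cZ/S})\,\textit{ch}(\cF,h)$ as a differential form.

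To connect the two ends I would invoke the transgression. Since each $\alpha_t$ is closed with constant de Rham class, $\tfrac{\pt}{\pt t}\alpha_t$ is exact; inserting the number operator $N$ and forming the Mellin transform $-\tfrac{1}{\Gamma(z)}\int_0^\infty t^{z-1}\,\mathrm{Tr}_s\big[N\exp(-\mathbb A_t^{\,2})\big]\,dt$, continued analytically past $z=0$, produces the higher analytic torsion form $T=\sum_k T^{(2k)}$, with $T^{(0)}$ the logarithm of the Ray--Singer torsion, together with a transgression identity of the shape
\[
   \frac{\ddb}{2\pi}\,T \;=\; \int_{\cZ/S}\textit{td}(\cZ/S,\omega_{\cZ/S})\,\textit{ch}(\cF,h)\;-\;\mathrm{ch}\big(H^\bullet(\cZ/S,\cF),\,h^{L^2}\big),
\]
where $H^\bullet(\cZ/S,\cF)$ is the $\mathbb Z/2$-graded cohomology bundle carrying its $L^2$-metric. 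Taking the degree-two component now finishes the argument: the degree-two part of $\mathrm{ch}(H^\bullet,h^{L^2})$ is, up to the sign built into the definition of $\lambda(\cF)$, the first Chern form of $\lambda(\cF)$ for the $L^2$-metric, while $\tfrac{\ddb}{2\pi}T^{(0)}$ is precisely the $\ddb$-exact correction converting the first Chern form of $h^{L^2}$ into that of $h^Q$ (this being the definition of the Quillen metric). Assembling these facts in degree two gives \eqref{eq:bgs}.

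The hard part is the $t\to 0$ limit, i.e.\ the local family index theorem itself: it requires uniform control of the fibrewise heat operator $e^{-t\Box_s}$ and a Getzler-rescaling argument to identify the limiting form with the Todd--Chern integrand, and this is the analytic core of \cite{bgs}. Subsidiary difficulties are the $t\to\infty$ analysis at points where $\dim H^q(\cZ_s,\cF\vert_{\cZ_s})$ jumps --- where one must work with the Knudsen--Mumford determinant and a uniform spectral gap of $\Box_s$ above its kernel --- and checking that the Mellin integral defining $T$ extends holomorphically across $z=0$, which needs the short-time asymptotic expansion of $\mathrm{Tr}_s\big[N e^{-\mathbb A_t^{\,2}}\big]$.
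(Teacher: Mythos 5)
The paper does not prove this statement: it is quoted verbatim from Bismut--Gillet--Soul\'e, and the ``proof'' consists of the citation \cite{bgs}. Your outline --- Quillen metric as the $L^2$-metric corrected by the Ray--Singer torsion, the Bismut superconnection form $\alpha_t$ interpolating between the Chern character of the cohomology bundle at $t\to\infty$ and the local family index density $\int_{\cZ/S}\textit{td}\cdot\textit{ch}$ at $t\to 0$, and the analytic-torsion transgression tying the two together in degree two --- is a faithful summary of exactly the argument in that reference, so it agrees with the proof the paper is implicitly invoking.
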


\subsection{Relative Riemann-Roch Theorem for hermitian orbifold vector bundles on orbifolds}
For a finite group $G$, acting holomorphically and effectively on a compact manifold $Z$ and an equivariant holomorphic vector bundle $\ul E$, in \cite[formula (4.4)]{a-s} Atiyah and Singer proved the equivariant index theorem. If $E$ denotes the coherent sheaf of invariant sections of $\ul E$ on the quotient space $Z/G$ the result is
\begin{equation}
   \chi(Z/G,E)= \frac{1}{|G|} \sum_{\gamma\in G}\langle \cI^\gamma(Z;\ul E)\rangle[Z_\gamma]
\end{equation}
where $Z_\gamma$ denotes the fixed point set of the automorphism $\gamma$ and $\cI^\gamma(Z;\ul E)$ the equivariant Todd class (which is evaluated over $Z_\gamma$ for all group elements including the identity). In particular, for the Dolbeault operator in the equivariant case, they computed the Euler-Poincar\'e characteristic of the sheaf of invariant holomorphic sections on the quotient \cite[Theorem (4.7)]{a-s}. Here one has to evaluate on all fixed point sets of the elements of $G$ characteristic classes for the restrictions of the given vector bundle and the normal bundle of the fixed point sets.

The relative setting of \eqref{eq:bgs} from \cite{bgs} was solved for holomorphic immersions by Bismut and Lebeau in \cite{b-l}. This theorem was later on expanded to the more involved equivariant case of immersions and submersions by Ma and Bismut in \cite{bi-ma, ma_suequi}. The orbifold case was treated as a local version of the equivariant situation. In \cite{kawasakiRR}, Kawasaki gave a heat equation proof of the Riemann-Roch Theorem for orbifolds.

The generalization of Theorem~\ref{th:quil} was shown by X.~Ma in \cite{ma}. The new ingredient of the Riemann-Roch formula is the {\em associated singular orbifold structure}.

Let a complex orbifold $(\ul X, \cA)$ be given in the sense of Section~\ref{se:orbi}. For any orbifold chart $(W,\Gamma, \varphi)$ we already assumed that $\Gamma$ acts {\em effectively} on $W$.  A point $w \in W$ is called {\em singular}, if the isotropy group $\Gamma_w$ is non-trivial, otherwise the point is called {\em regular}. The sets of singular points give rise to a disjoint union of orbifolds $\Sigma \ul X$, which are immersed in $\ul X$ as the singular stratification (cf.\ \cite{kawasakiST}).

The local models are defined as follows adopting the notation of Section~\ref{se:orbi}. Let $x$ be a singular point and $W_x$ a neighborhood, which is invariant under the action of $\Gamma_x$. We now have an orbifold chart $(W_x, \Gamma_x, \varphi_x)$, where $\varphi_x: W_x \to U_x$ is the induced quotient map. For all elements $h^j_x \in \Gamma_x$ we have the conjugacy classes $(1), (h^1_x), \ldots, (h^{\rho_x}_x)$ in $\Gamma_x$. Observe that the conjugacy classes of elements correspond to the $\Gamma_x$-orbits of the fixed point sets. Up to isomorphism, these only depend on the image of the point $x$ in $U_x \subset X$, so that $x$ can be considered as a point in $X$.

Denote the sets of fixed points by $W^{h^j_x}_x$. These sets  are equipped with a natural action of the centralizers $Z_{\Gamma_x}(h^j_x)$. Since this action need not be effective, the centralizer can be divided by the ineffectivity kernel $K^j_x$. The number $m_j$ of the elements of $K^j_x$ is called the {\em multiplicity} of the orbifold $\Sigma{\ul X}$ at $(x, h^j_x)$. The resulting group acting on $W^{h^j_x}_x$ is $Z_{\Gamma_x}(h^j_x)/K^j_x$. The orbifold chart $(W_x,\Gamma_x,\varphi_x)$ gives rise to a union of orbifold charts. The disjoint union of different quotients $W^{h^j_x}_x/Z_{\Gamma_x}(h^j_x)$ corresponds to the set $\{(y,(h^j_y)); y \in W_y, j=1,\ldots, \rho_y \}$. (The index $j=0$ with $h^0_y=1$ that yields the total orbifold $\ul X$ is being excluded).

These orbifold charts fit together and define the singular orbifold structure $\Sigma\ul X$, namely
$$
\Sigma\ul X= \{ (x, (h^j_x)); x \in X, \Gamma_x \neq 1, j=1,\ldots,\rho_x\}.
$$
The images of the singular components in the underlying space $X$ are denoted by $X_j$.

Our approach relies on the following theorem of Ma, which is stated for the general situation of proper holomorphic orbifold submersions. In our case we are looking at an orbifold $\ul X$ together with the canonical projection $\pi : \ul X\times S \to S$. We state the theorem for this special case (for the extension from smooth parameter spaces $S$ to reduced complex spaces as parameter spaces cf.\ \cite[Appendix]{f-s}).

By assumption $\ul X$ carries a Kähler structure. Denote the hermitian metric on the orbifold tangent bundle by $g^{TX}$. Let $\xi$ be an orbifold vector bundle on $\ul X\times S$ equipped with a hermitian metric $h^\xi$. In \cite{ma} the holomorphic determinant bundle $\lambda$ is constructed and equipped with a Quillen metric $h^Q$. The induced hermitian connection is denoted by $\nabla^\lambda$.

\begin{theorem}[X.~Ma, Theorem 3.3 \cite{ma}]\label{th:ma}
   The curvature of the Quillen metric on the determinant line bundle $\lambda$ equals
   \begin{equation}\label{eq:ma}
      (\nabla^\lambda)^2 = 2\pi \sqrt{-1} \Big[\Sigma_{j\geq 0} \frac{1}{m_j} \int_{X_j} td^{\Sigma}(TX,g^{TX})ch(\xi,h^\xi)  \Big]^{(2)}.
   \end{equation}
\end{theorem}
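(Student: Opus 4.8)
The plan is to deduce \eqref{eq:ma} from Theorem~\ref{th:bgs} and its equivariant refinement, exploiting that a complex orbifold is locally the quotient of a manifold by a finite group. I would first reduce to a smooth parameter space, the passage to reduced complex spaces being handled as in \cite[Appendix]{f-s}, and then note that both sides of \eqref{eq:ma} are local on $S$ and additive over a partition of unity on the underlying space $X$ subordinate to the covering by orbifold charts; so it suffices to fix one chart $(W\times S,\Gamma,\varphi\times\mathrm{id}_S)$ and compute the contribution of the projection $W\times S\to S$ carrying the fibrewise action of $\Gamma$. Over such a chart the orbifold determinant bundle $\lambda$ is, by construction, the $\Gamma$-invariant part of $\det R\pi_{*}(\xi)$ on $W\times S$, and the Quillen metric $h^{Q}$ is the one induced from the equivariant Quillen metric, that is, built from the analytic torsion of the $\Gamma$-invariant Dolbeault complex.

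The analytic heart of the matter is the equivariant family local index theorem with Quillen metrics. The approach is to run Bismut's superconnection and heat-kernel proof of Theorem~\ref{th:bgs} $\Gamma$-equivariantly --- as in the equivariant immersion and submersion formulas of Bismut and Lebeau \cite{b-l} and of Ma and Bismut \cite{bi-ma,ma_suequi} --- so that the rescaled heat supertrace of $\gamma$ composed with the Bismut superconnection localizes on the fixed-point submanifold $W^{\gamma}$ and produces there the equivariant Todd and Chern character forms, while the equivariant analytic torsion forms transgress the difference between the $L^{2}$ metric and the Quillen metric; Kawasaki's heat-equation proof of the orbifold Riemann--Roch theorem \cite{kawasakiRR} is the ``$S=\mathrm{pt}$'' shadow of this computation. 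The resulting local formula reads
\[
(\nabla^{\lambda})^{2}=2\pi\sqrt{-1}\left[\frac{1}{|\Gamma|}\sum_{\gamma\in\Gamma}\int_{(W^{\gamma}\times S)/S}\mathrm{td}_{\gamma}(TW/S)\,\mathrm{ch}_{\gamma}(\xi,h^{\xi})\right]^{(2)}.
\]

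I would then repackage the group sum by conjugacy classes and match it with the right-hand side of \eqref{eq:ma}. Conjugate group elements have conjugate, hence isomorphic, fixed-point sets, so $\int_{W^{\gamma}}(\cdots)$ depends only on the conjugacy class $(h)$, and that class has $|\Gamma|/|Z_{\Gamma}(h)|$ elements; therefore $\frac{1}{|\Gamma|}\sum_{\gamma}\int_{W^{\gamma}}=\sum_{(h)}\frac{1}{|Z_{\Gamma}(h)|}\int_{W^{h}}$. The centralizer $Z_{\Gamma}(h)$ acts on $W^{h}$ with ineffectivity kernel of order $m_{j}$, and $W^{h}/Z_{\Gamma}(h)$ is precisely the piece $X_{j}$ of the singular stratum $\Sigma\ul X$ with multiplicity $m_{j}$; since orbifold integration over $X_{j}$ divides by the order $|Z_{\Gamma}(h)|/m_{j}$ of the effective quotient group, $\frac{1}{|Z_{\Gamma}(h)|}\int_{W^{h}}=\frac{1}{m_{j}}\int_{X_{j}}$. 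The term $\gamma=1$ contributes $X_{0}=X$ with $m_{0}=1$ and recovers the Bismut--Gillet--Soul\'e term of Theorem~\ref{th:bgs}. Under this identification the equivariant forms restricted to $W^{h}$ are exactly the classes denoted $\mathrm{td}^{\Sigma}(TX,g^{TX})$ and $\mathrm{ch}(\xi,h^{\xi})$ over $X_{j}$ in \eqref{eq:ma}: $\mathrm{td}^{\Sigma}$ is the ordinary Todd form of $TX_{j}$ times the normal-bundle factor prescribed by the eigenvalue decomposition of the $h$-action on $N_{X_{j}/X}$, and $\mathrm{ch}(\xi,h^{\xi})$ over $X_{j}$ is the $h$-twisted Chern character $\mathrm{tr}\big(h\,e^{\sqrt{-1}R^{\xi}/2\pi}\big)$. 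Finally the local contributions are compatible on overlaps, because characteristic forms, fibre integration and the Quillen metric are functorial under injections of orbifold charts, so they glue to \eqref{eq:ma}.

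The hard part will be the analytic input of the equivariant index computation above: constructing the equivariant Quillen metric in the family setting and establishing the curvature identity, which demands control of the equivariant analytic torsion forms, their anomaly formulas, and the precise localization of the rescaled Bismut superconnection heat kernel on the fixed loci. In the orbifold incarnation this is exactly Ma's new ingredient --- one must first organize the fixed-point data of all the chart groups into a single object $\Sigma\ul X$ with well-defined multiplicities before the fibre integrals over the $X_{j}$ acquire any global meaning; once that machinery is available, the reduction to charts, the conjugacy-class bookkeeping, and the identification of $\mathrm{td}^{\Sigma}$ are formal.
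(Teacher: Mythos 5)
First, a point of order: the paper does not prove this statement at all --- it is quoted as Theorem 3.3 of Ma's preprint \cite{ma} and used as a black box, so there is no internal proof to compare yours against. Your outline is a plausible reconstruction of the strategy behind Ma's theorem (fixed-point localization of the Bismut superconnection heat kernel, reorganization of the group sum by conjugacy classes, identification of the centralizer quotients with the strata $X_j$ of $\Sigma\ul X$ and of the ineffectivity kernels with the multiplicities $m_j$), and the combinatorial part --- the passage from $\frac{1}{|\Gamma|}\sum_{\gamma}\int_{W^{\gamma}}$ to $\sum_{j}\frac{1}{m_j}\int_{X_j}$ together with the recovery of the Bismut--Gillet--Soul\'e term from $\gamma=1$ --- is correct and consistent with the description of the singular orbifold structure given in the paper.

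As a proof, however, the proposal has a genuine gap at exactly the place you flag as ``the hard part'': you assume the equivariant family local index theorem with Quillen metrics, i.e.\ the displayed curvature formula over a single chart, and that is essentially the theorem itself rather than an available input. Two specific difficulties are glossed over. First, a compact orbifold is in general not a global quotient $Z/\Gamma$, so the equivariant refinements of Theorem~\ref{th:bgs} cannot simply be invoked; and the reduction to a single chart via ``additivity over a partition of unity'' is not legitimate for the left-hand side, because the Quillen metric is built from the analytic torsion of the global Dolbeault Laplacian of the orbifold, which is not local on $X$ --- only after the full heat-kernel localization and the transgression/anomaly analysis does the curvature become expressible as a local fiber integral. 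Second, the construction of the determinant line bundle and its Quillen metric in the orbifold family setting, including the identification of the spectral determinant with the Knudsen--Mumford determinant (which the paper itself takes from \cite[Section 5]{ma}), must precede any curvature computation. These are precisely the analytic contents of Ma's paper; without them your argument reduces the statement to itself.
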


Now we assume that the orbifold $\ul X$ is of Hodge type with $\omega_X=c_1(L,k)$.
\begin{theorem}\label{th:quil}
   Let $S$ be a reduced complex space. Then up to a numerical factor the generalized \wp\ form $\omega^{WP}$ for stable orbifold vector bundles $\cE$ on $\ul X\times S$ is equal to the Chern form of a natural line bundle $\lambda$ equipped with a hermitian metric $h^Q$:
   $$
      c_1(\lambda, h^Q)\simeq \omega^{WP}.
   $$
The construction of this holomorphic, hermitian line bundle is functorial in $S$: If $$\psi:W \to S$$ is a base change morphism, then the line bundle for the given family pulled back to $X\times W$ is $\psi^*\lambda$ equipped with the pulled back hermitian metric $\psi^*h^Q$.

Moreover, the hermitian line bundle $(\lambda, h^Q)$ descends to the moduli space as such.
\end{theorem}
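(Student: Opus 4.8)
The plan is to combine Ma's curvature formula (Theorem~\ref{th:ma}) with the fiber-integral expression for the generalized \wp\ form (Proposition~\ref{pr:fibintendo}), and then to check that the resulting line bundle with Quillen metric descends from a local universal deformation space to the moduli space $\cM$. I would organize the argument in three steps.

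First, the comparison of curvature forms. Apply Theorem~\ref{th:ma} to the family $\xi := \textit{End}(\cE)$ (or a suitable twist; one wants a bundle whose degree-two fiber integral of $td^\Sigma\cdot ch$ produces $\omega^{WP}$) on $\ul X\times S$. Expanding $ch(\textit{End}(\cE),h)$ in degrees, the only term that contributes in the fiber-integral degree-two component after integration over the $X_j$ is the product $ch_2(\textit{End}(\cE),h)\we (\text{a fixed }(n-1,n-1)\text{-form built from }\omega_X\text{ and }td^\Sigma)$, together with lower-order cross terms. For $j=0$, where $X_0=X$, $m_0=1$ and $td^\Sigma(TX)=td(TX)$, the leading piece is $-\tfrac1r\int_{X\times S/S}ch_2(\textit{End}(\cE),h)\we\omega^{n-1}$ up to the universal normalization supplied by the Todd form, which by \eqref{eq:fibintendo} is exactly $\tfrac{1}{2\pi^2}\omega^{WP}$ modulo the determinant-correction term. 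The contributions from the singular strata $X_j$ with $j\ge1$, as well as the lower-degree Todd factors on $X_0$, depend only on $\ul X$, on $\omega_X$, and on topological invariants of $\cE$ fixed along $\cM$; they therefore assemble into a smooth form that is pulled back from a point, hence can be absorbed (for instance by replacing $\lambda$ with $\lambda$ twisted by a flat line bundle, or simply by noting that it is $\pt\ol\pt$-exact with a potential depending only on $s$ through topological data, i.e.\ identically a constant form). After the conformal renormalization $H\mapsto e^{-u/r}H$ from the proof of Proposition~\ref{pr:fibint}, which kills the determinant term, one obtains $c_1(\lambda,h^Q)\simeq\omega^{WP}$ on $S$.

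Second, functoriality in $S$. Both sides of the asserted identity are functorial: $\omega^{WP}$ is compatible with base change by Proposition~\ref{propi1}, and the determinant line bundle $\lambda=\det\pi_!\xi$ with its Quillen metric is functorial with respect to Cartesian base change by construction in \cite{ma} (the Quillen metric being defined fiberwise from the analytic torsion and the $L^2$-metric on cohomology, both of which pull back). So for $\psi:W\to S$ the line bundle for the pulled-back family is $\psi^*\lambda$ with metric $\psi^*h^Q$, and the curvature identity pulls back accordingly.

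Third — and this is the step I expect to be the main obstacle — the descent to $\cM$. The moduli space $\cM$ need not carry a universal family, only local universal deformations $\cE^{(\alpha)}$ over patches $S_\alpha$ mapping to $\cM$; on overlaps these differ by a line-bundle twist $\cE^{(\alpha)}\cong\cE^{(\beta)}\otimes p_{S}^*N_{\alpha\beta}$ for a line bundle $N_{\alpha\beta}$ on the overlap (coming from the non-uniqueness of the universal object for simple, non-rigid bundles). One must show that $\lambda$ transforms by the corresponding cocycle in a way that cancels once one uses that $\det(\cE_s)$ is the fixed invertible sheaf $L$ along $\cM$: the twist by $N_{\alpha\beta}$ changes $\xi=\textit{End}(\cE)$ not at all (endomorphisms are insensitive to twisting), so $\lambda(\textit{End}\cE^{(\alpha)})$ and $\lambda(\textit{End}\cE^{(\beta)})$ agree canonically on overlaps, and the glued line bundle on $\cM$ is well-defined; the Quillen metrics glue because they are intrinsically attached to $\textit{End}(\cE_s)$ with its \he\ metric, which is canonical. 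The subtle point to verify carefully is that the isomorphisms on triple overlaps satisfy the cocycle condition (no residual gerbe-type obstruction), which follows because the ambiguity in a local universal family is itself controlled by a line bundle on the base and the associated automorphisms are scalars acting trivially on $\textit{End}(\cE)$. Finally, since $\cE$ is an orbifold bundle and all constructions were carried out $\Gamma$-equivariantly on the charts $W\times S$ with the results divided by $|\Gamma|$ (as in \eqref{eq:locint}), the descended object is a genuine holomorphic hermitian line bundle on $\cM$, completing the proof.
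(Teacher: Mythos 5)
Your Steps 2 and 3 (functoriality under base change, and descent to $\cM$ via the insensitivity of $\textit{End}(\cE)$ to the line-bundle ambiguity of local universal families) are correct and agree with what the paper does. The genuine gap is in Step 1. You apply Theorem~\ref{th:ma} to $\xi=\textit{End}(\cE)$ and assert that the lower-degree Todd factors on $X_0$ and the contributions of the singular strata $X_j$, $j\ge 1$, depend only on topological data fixed along $\cM$ and hence assemble into a constant form that can be absorbed. This is false at the level of differential forms, which is what the theorem asserts (an identity of Chern \emph{forms}, not of cohomology classes). The $(1,1)$-component of $\int_{X\times S/S} td(TX)\,ch(\textit{End}(\cE),h)$ contains, besides the term with $td_{n-1}(TX)$, terms of the shape $td_{n-1-k}(TX)\we ch_{2+k}(\textit{End}(\cE),h)$ for $k\ge 1$; these higher Chern character forms are built from the curvature of the varying family metric $H$, and their fiber integrals are genuinely $s$-dependent $(1,1)$-forms on $S$, not pullbacks from a point. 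The same is true of the strata $X_j$. Moreover even your leading term carries $td_{n-1}(TX)$ rather than $\omega_X^{n-1}$, so it does not reproduce \eqref{eq:fibintendosimp}.

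The missing idea --- the actual content of the paper's proof --- is the choice of the virtual bundle
$$
\cF=(\textit{End}(\cE)-\cO_{X\times S}^{r^2})\otimes(\pi^*L-\pi^*L^{-1})^{\otimes(n-1)},
$$
where $\omega_X=c_1(L,k)$ by the Hodge-type hypothesis. Since $ch(\textit{End}(\cE),H)-r^2$ starts in degree $2$ (because $c_1(\textit{End}(\cE))=0$) and $ch(L-L^{-1},k)=2c_1(L,k)+O(c_1(L,k)^3)$ starts in degree $1$, the form $ch(\cF,h)$ has no components in degree $\le 2n$, and its lowest term is $2^{n-1}c_1(L,k)^{n-1}\cdot ch_2(\textit{End}(\cE),H)=2^{n-1}\omega_X^{n-1}\we ch_2(\textit{End}(\cE),H)$. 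Consequently, in \eqref{eq:ma} only the constant term of the Todd form survives in the $(1,1)$-component of the fiber integral, the singular strata (of dimension $<n$) contribute nothing at all, and the result is exactly the right-hand side of \eqref{eq:fibintendosimp}, i.e.\ a numerical multiple of $\omega^{WP}$. Setting $\lambda=\det\pi_!(\cF)^{-1}$ as in \eqref{lambda} then yields the theorem with no error terms left to absorb. Without this cancellation device your argument does not close.
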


\begin{proof}
We first note that also in the orbifold case the spectral determinant can be identified with the Knudsen-Mumford determinant \cite[Section 5]{ma}. By a purely formal argument, the Riemann-Roch formula \eqref{eq:ma} holds for elements of the Grothendieck group.

Define an element $\chi$ of $\oplus_{i\geq 0}\cA^{i,i}_{\ul X \times S}(\ul X\times S)$ by
\begin{equation}\label{eq:defchi}
   \chi= \left(ch(\textit{End}(\cE), H)-\cO^{r^2}_{\ul X\times S}) \right) \cdot ch\left(((L,k) - (L,k)^{-1})^{\otimes(n-1)}\right).
\end{equation}
Observe that the Chern character form of the hermitian orbifold bundle $\textit{End}(\cE)$ is defined in terms of the curvature form, which is an invariant form, so that $\chi$ is again given in an invariant way. \qed

All components of the form $\chi$ in degree smaller or equal to $2n$ vanish, and the term of smallest degree is the $(n+1,n+1)$-component
$$
\chi' = 2^{n-1} c_1(L,k)^{n-1}\cdot ch_2(End(\cE),H).
$$
Let $\pi:X\times S\to  S$ be the natural projection. Then the $(1, 1)$-component of the pushforward $\pi_*\chi$ is the $(1,1)$-component of the fiber integral in \eqref{eq:fibintendosimp}. Hence it coincides with $\omega^{WP}$ up to multiplication by a numerical constant.

Consider the following line bundle on $S$
\begin{equation}\label{lambda}
   \lambda:= \det \pi_!\left((\textit{End}(\cE) -\cO_{X\times S}^{r^2}) \otimes(\pi^*L- \pi^*L^{-1} )^{\otimes(n-1)} \right)^{-1}.
\end{equation}
Note that for proper smooth holomorphic maps of complex spaces, and for projections $\ul X \times S \to S$, where $\ul X$ denotes a compact complex orbifold, the direct image taken in the derived category can be computed in terms of Forster-Knorr systems \cite{f-k1}. These simplicial objects in the category of finite free coherent modules are used to represent the direct image locally up to quasi-isomorphism by {\em bounded} complexes of finite free modules so that a determinant line bundle exists.

For locally free sheaves $\cE'$ and $\cE''$, the determinant line bundle of $\cE'-\cE''$ is defined as
$$
\det\pi_!(\cE'-\cE'') = \det\pi_!(\cE') \otimes \det\pi_!(\cE'')^{-1} .
$$
Note that $\chi$ is the Chern character form of
\begin{equation}\label{eq:eF}
   (\textit{End}(\cE)-\cO_{X\times S}^{r^2}) \otimes(\pi^*L- \pi^*L^{-1} )^{\otimes(n-1)}
\end{equation}
with the induced hermitian metrics.

For notational convenience, we will denote the element in \eqref{eq:eF} by $\cF$. We equip $\cE$ with the fiberwise \he\ metric. Furthermore, $\omega_X$ defines a relative metric $\omega_{X\times S/S}$.

Now we apply Theorem~\ref{th:ma}: Since the integrand is of degree $(n+1,n+1)$, the contribution of the singular part $\Sigma \ul X$ vanishes identically so that
\begin{equation}\label{eq:bgsrr}
   c_1(\lambda,h^Q)= - \left(\int_{X\times S/S} {\rm td}(X\times S/S,\omega_{X\times S/S})\cdot {\rm ch}(\cF,h) \right)^{(1,1)}.
\end{equation}

Consider $\chi$ defined in \eqref{eq:defchi}. Note that it does not contain any term of degree $(j,j)$ for $j\leq n$. Since we are computing the $(1,1)$-component of a fiber integral, and the fibers are of complex dimension $n$, this implies that the only contribution of the Todd character form in \eqref{eq:bgsrr} is the constant one. Therefore the integrand in \eqref{eq:bgsrr} is equal to $\chi'$.

Although there need not be a universal bundle on the moduli space, the determinant line bundle $\lambda$ given in \eqref{lambda} is well-defined: Since the families $\cE$ on $X\times S$ are unique up to pull-backs of line bundles on $S$, the endomorphism bundles $End(\cE)$ and hence $\lambda$ exist globally.

Now the claim follows from \eqref{eq:fibint}.
\end{proof}

\section{The \wp\ current}\label{se:wpcurr}
The following theorem is a generalization of a result by Teleman (\cite[Theorem 1.4]{tele}) and answers a conjecture of his (Conjecture~2 in p.\ 544 of \cite{tele}) in an affirmative way for projective varieties.

We note that our proof also applies to families of vector bundles on a Kähler manifold that {\em degenerate to coherent sheaves}. We cannot make any statement about conceivable transcendental degenerations.

By definition a positive closed current on a reduced complex space is a current on the normalization. We only need the weakest extension property for the \wp\ form, namely the existence of an extension to a desingularization of a compactification.

\begin{theorem}\label{th:ext}
   Let $\cM$ be the moduli space of stable holomorphic vector bundles on a projective manifold $X$ such that the determinant line bundles are fixed. Then there exists a compactification $\ol\cM$ such that the pull-back of the generalized \wp\ form to a desingularization of $\ol\cM$ extends as a positive $d$-closed $(1,1)$-current.
\end{theorem}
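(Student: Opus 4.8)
The plan is to localize the problem along the boundary, rewrite the mass of $\omega^{WP}$ as a single fiber integral over the total space of the family, and reduce the whole statement to one estimate on how fast the fiberwise \he\ metrics can degenerate. First I would fix the compactification: by Lieblich's theorem \cite{lieb} the moduli space $\cM$ is an algebraic space, hence admits a compactification $\ol\cM$ as an analytic space; let $p:\wh\cM\to\ol\cM$ be a desingularization with $D:=p^{-1}(\ol\cM\setminus\cM)$ a reduced simple normal crossing divisor, so that $p$ is an isomorphism over $\cM\cong\wh\cM\setminus D$ and the pull-back of $\omega^{WP}$ is a smooth positive $d$-closed $(1,1)$-form on $\wh\cM\setminus D$. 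By the Skoda--El~Mir theorem, it suffices to show that this form has locally finite mass near $D$; its trivial extension across $D$ is then automatically positive and $d$-closed, which is the asserted current. Thus everything reduces to a mass estimate near a point of $D$, in a polydisc $U\subset\wh\cM$ with $U\cap D=\{t_1\cdots t_l=0\}$. Over $X\times(U\setminus D)$ we have the endomorphism bundle $\textit{End}(\cE)$ of the family — globally defined even without a universal bundle, as noted after \eqref{lambda} — with its fiberwise \he\ metric $H$, and by Proposition~\ref{pr:fibintendo}, on $U\setminus D$, $\ \frac1{2\pi^2}\omega^{WP}=-\frac1r\,\pi_*\!\left(ch_2(\textit{End}(\cE),H)\we\omega^{n-1}\right)$ for the projection $\pi:X\times U\to U$. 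Since $\omega^{WP}\geq 0$, the projection formula gives that its mass on a relatively compact $K\Subset U$, measured against a reference \ka\ form $\beta$ on $U$, equals $\frac{2\pi^2}{r}\int_{(X\times K)\setminus(X\times D)}ch_2(\textit{End}(\cE),H)\we\omega^{n-1}\we\pi^*\beta^{N-1}$, where $N=\dim U$; so the theorem follows once this smooth top-degree form on $X\times(U\setminus D)$ is shown to be integrable on $X\times U$.

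Now the hypothesis of the theorem enters. By assumption the family $\cE$ over $X\times(U\setminus D)$ is the restriction of a coherent sheaf $\wt\cE$ on $X\times U$; choosing a finite locally free resolution $\cV^\bullet\to\wt\cE$ on the smooth space $X\times U$ together with smooth hermitian metrics on the $\cV^i$ produces a smooth ``reference'' metric $h_0$ for which the relevant Chern character forms extend smoothly across $X\times D$. Bott--Chern transgression between $H$ and $h_0$ then writes $ch_2(\textit{End}(\cE),H)=ch_2(\textit{End}(\cE),h_0)+\ddb\,T$ on $X\times(U\setminus D)$ with a secondary form $T$ built from $\log\det(Hh_0^{-1})$ and the curvatures $F(\cE,H)$, $F(h_0)$. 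Consequently $ch_2(\textit{End}(\cE),H)\we\omega^{n-1}\we\pi^*\beta^{N-1}$ differs from a form that extends smoothly (hence is integrable) over $X\times U$ by a term whose size is controlled by $|\log\det(Hh_0^{-1})|$ and by the curvature of $H$, times smooth densities. So the integrability — and with it the whole theorem — comes down to a quantitative control of how the \he\ metric $H$ blows up relative to $h_0$ as one approaches $X\times D$.

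The key estimate I would need is that $H$ degenerates only \emph{tamely}: there exist constants $C,M$ with $C^{-1}|t_1\cdots t_l|^{M}h_0\le H\le C|t_1\cdots t_l|^{-M}h_0$ fiberwise near $X\times D$, equivalently $|\log\det(Hh_0^{-1})|\le C(1+\sum_j\log|t_j|^{-1})$, together with corresponding polynomial-in-$|t_j|^{-1}$ bounds for $F(\cE,H)$. Granting this, the error term in the previous paragraph is dominated by $\sum_j\log|t_j|^{-1}$ times a smooth density, which is locally integrable on $X\times U$ since $\log|t|^{-1}\in L^1_{\mathrm{loc}}$; this yields the finite mass, and Skoda--El~Mir then produces the global positive $d$-closed extension on $\wh\cM$, the local extensions agreeing because each is the trivial extension. (When $X$ is of Hodge type one identifies, via Theorem~\ref{th:quil}, this extended current with the curvature current of a singular hermitian metric on the determinant line bundle $\lambda$, which is how the extension of $\lambda$ to $\ol\cM$ is read off; the case of general \ka\ $X$ uses only the fiber-integral argument above and is the reason the method also covers families on \ka\ manifolds degenerating to coherent sheaves.) The passage through a desingularization for reduced/singular parameter spaces is handled as in \cite{f-s}.

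The main obstacle is exactly this tameness estimate for $H$ near $D$: one must show that the fiberwise \he\ metrics of a family of stable bundles degenerating, inside a coherent sheaf on the smooth total space, to a non-locally-free or non-polystable central sheaf blow up at most polynomially in the boundary parameters. This is where the algebraic nature of the degeneration — the restriction-of-a-coherent-sheaf hypothesis — is indispensable and where purely transcendental degenerations are out of reach. I expect to establish it by combining the uniform a priori $C^0$ and $L^2$ estimates of the Uhlenbeck--Yau--Donaldson construction \cite{do,uhy} applied along the family (using that the slope, hence the Einstein factor, stays bounded under the degeneration) with a reduction — after a finite base change and further blow-ups of $U$ — to a semistable-reduction normal form for the degenerating family, on which the comparison between $H$ and the reference metric $h_0$ can be made explicit; a maximum-principle argument fiber by fiber then converts these uniform bounds into the required two-sided polynomial estimate.
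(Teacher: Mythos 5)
Your reduction to a local mass estimate near a simple normal crossing boundary is sound in outline (Lieblich, desingularization, Skoda--El~Mir, and the fiber integral formula of Proposition~\ref{pr:fibintendo} all do what you say), but the proof has a genuine gap at exactly the point you flag as ``the main obstacle'': the two-sided polynomial bound $C^{-1}|t_1\cdots t_l|^{M}h_0\le H\le C|t_1\cdots t_l|^{-M}h_0$ for the fiberwise \he\ metrics, together with polynomial curvature bounds. Nothing in your sketch establishes this. The uniform $C^0$ and $L^2$ a priori estimates of Donaldson and Uhlenbeck--Yau are not uniform in a family whose members degenerate to a non-locally-free or non-polystable sheaf; the constants depend on the fiberwise geometry (e.g.\ on how close the bundle is to being unstable), and they blow up in precisely the situations the theorem is about. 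A semistable-reduction normal form after base change and blow-ups is not available for this purpose either. So the argument, as written, reduces the theorem to an unproved estimate that is at least as hard as the theorem itself, and possibly false in the pointwise form you need.

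The paper avoids any pointwise comparison of $H$ with a reference metric. Its local statement (Theorem~\ref{pr:ext}) is proved by: (i) using Hironaka's flattening theorem twice to construct, on a modification $\wt{X\times S}\times_S\wt S$, an \emph{initial} hermitian metric $\wh H_0$ on the pulled-back locally free sheaf whose determinant has harmonic fiberwise curvature (Section~\ref{se:initial}); (ii) running the heat flow from $\wh H_0$ to the \he\ metric and writing $\omega^{WP}=\omega^{WP}_{init}+\ii\pt\ol\pt M$, where $M$ is the Donaldson functional, which satisfies $M\le 0$; (iii) bounding the initial term by auxiliary \ka\ forms on a further flattening $W\to\wh S$ and invoking Varouchas' result that the pushforward has \emph{continuous} plurisubharmonic local potentials; and (iv) applying the extension theorem for plurisubharmonic functions bounded above. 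Only the sign of $M$ is needed, not its magnitude, which is what makes the degeneration tractable. The globalization is then done by Proposition~\ref{pr:extcurr} via Siu's decomposition theorem and the vanishing of Lelong numbers, rather than by Skoda--El~Mir trivial extensions. If you want to salvage your route, the transgression term $T$ you introduce is essentially the secondary Bott--Chern form $R_2$ of the paper, and the correct move is to integrate it over the fibers (obtaining $M$) and exploit its sign, rather than to seek pointwise control of $H h_0^{-1}$.
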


The strategy of the proof of Theorem~\ref{th:ext} is as follows. We show that there are local extensions; this is proved in Theorem~\ref{pr:ext}. In view of this fact, the proof of Theorem~\ref{th:ext} is completed by Proposition~\ref{pr:extcurr}.

\begin{proposition}[cf.\ {\cite[Proposition~8]{sch-pos}}] \label{pr:extcurr}
   Let $A$ be a closed analytic subset of a complex manifold $Y$, and let $Y':= Y\setminus A$. Let $\omega'$ be a closed, positive current on $Y'$, whose Lelong numbers vanish everywhere. Assume that for any point of $A$ there exists an open neighborhood $U\subset Y$ such that $\omega'\vert_{Y'\cap U}$ possesses an extension to $U$ as a closed positive current. Then $\omega$ can be extended to all of $Y$.
\end{proposition}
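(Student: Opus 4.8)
The plan is to reduce the global extension problem to the given local extensions by a gluing argument, using the fact that a positive closed $(1,1)$-current that admits a local extension across an analytic set has an \emph{essentially unique} extension once one demands that the extension not charge $A$. First I would observe that, since the statement is local on $Y$ near $A$ and trivial away from $A$, it suffices to produce, for each point $p\in A$, a canonical extension on a neighborhood and then check that these canonical extensions agree on overlaps. The natural candidate for the canonical extension is the trivial (or simple) extension $\wt\omega$ of $\omega'$ by zero across $A$: on a neighborhood $U$ where some closed positive extension $\omega_U$ exists, one has $\omega_U = \wt\omega + T_U$ where $T_U$ is a closed positive current supported on $A\cap U$; by the support theorem (Federer, applied to currents supported on the analytic set $A$ of the relevant codimension), $T_U$ is a sum of integration currents along the codimension-one components of $A$ with nonnegative coefficients. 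The hypothesis that the Lelong numbers of $\omega'$ vanish everywhere on $Y'$ — combined with the local extendability — forces these coefficients to vanish: the Lelong number of $\omega_U$ along a component $A_k$ of $A$ would be at least the coefficient of $[A_k]$ in $T_U$, while semicontinuity of Lelong numbers and the vanishing on $Y'$ give that this Lelong number is zero. Hence $\omega_U = \wt\omega$ on $U$, so in particular $\wt\omega$ is itself closed and positive near every point of $A$.

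With that, the argument is essentially finished: the trivial extension $\wt\omega$ is a well-defined current on all of $Y$ (it is just $\omega'$ viewed as a current of order zero on $Y$, which makes sense because $\omega'$ has locally finite mass near $A$ — again a consequence of the existence of the local extensions, each of which has locally finite mass). It is positive because $\omega'$ is. It is closed because closedness is a local condition and we have just shown $\wt\omega$ coincides locally near $A$ with the closed current $\omega_U$, while on $Y'$ it coincides with $\omega'$, which is closed by hypothesis. So $\omega := \wt\omega$ is the required global closed positive extension.

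The one technical point that has to be handled with care — and which I expect to be the main obstacle — is justifying that $\omega'$ has locally finite mass near $A$ and that the trivial extension is closed; a priori $d\wt\omega$ could pick up a current supported on $A$. The clean way around this is exactly the reduction above: one never extends "by hand" but instead \emph{imports} the local extension $\omega_U$ from the hypothesis, shows by the Lelong-number argument that $\omega_U$ puts no mass on $A$, and concludes $\omega_U$ equals the trivial extension; then finite mass and $d$-closedness come for free from $\omega_U$. One should also record that the decomposition $\omega_U = \wt\omega + T_U$ with $T_U \geq 0$ supported on $A$ is legitimate: this is the Skoda–El Mir type statement that a positive current which is closed on $Y'$ and has locally finite mass near $A$ has a closed trivial extension, together with Federer's support theorem identifying the difference. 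The remainder of the proof is just patching: the local extensions agree on overlaps of the neighborhoods $U$ because each equals the trivial extension there, so they glue to a global current, completing the proof.
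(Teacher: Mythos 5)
Your construction --- take the trivial extension $\wt\omega$ of $\omega'$ by zero across $A$, justify its closedness by Skoda--El Mir, and control the discrepancy $T_U=\omega_U-\wt\omega$ by the support theorem --- is sound in its load-bearing parts, and it is a genuinely different route from the paper's. But one step, as written, is false: you claim that upper semicontinuity of Lelong numbers together with their vanishing on $Y'$ forces the Lelong number of $\omega_U$ along a codimension-one component $A_k\subset A$ to vanish, hence $T_U=0$ and $\omega_U=\wt\omega$. Semicontinuity goes the wrong way: Lelong numbers are \emph{upper} semicontinuous, so they may jump up on $A$; vanishing on the dense open set $Y'$ says nothing about their values on $A$. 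Concretely, $\omega_U=[A_k]$ is a closed positive extension of $\omega'=0$ (whose Lelong numbers vanish identically on $Y'$) with $T_U=[A_k]\neq 0$, so a local extension furnished by the hypothesis need not coincide with the trivial extension. Fortunately nothing in the conclusion requires this: the existence of the local extensions already gives $\omega'$ locally finite mass near $A$, Skoda--El Mir (applied to the closed complete pluripolar set $A$) then makes the trivial extension $\wt\omega=\mathbf{1}_{Y'}\omega'$ a closed positive current on all of $Y$, and since $\wt\omega$ is defined directly from $\omega'$ there is no gluing left to perform. You should delete the semicontinuity argument and present the proof in this form; note that the corrected argument does not use the vanishing of the Lelong numbers at all.

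For comparison, the paper proceeds differently: it first reduces to the case where $A$ lies in a simple normal crossings divisor via an embedded resolution, applies Siu's decomposition $\omega_U=\sum_k\mu_k[Z_k]+R$ to each local extension, uses the vanishing of the Lelong numbers of $\omega'$ to conclude that every $Z_k$ with $\mu_k>0$ is contained in $A$ --- so that the residual current $R$ is still an extension of $\omega'$ and is in fact the null extension --- and then glues the local currents $R$, the difference of two of them being a closed order-zero current supported on $A$ and hence zero by the support theorem. In that argument the Lelong-number hypothesis does real work (it guarantees $R\vert_{Y'}=\omega'$), whereas your route replaces Siu's decomposition by Skoda--El Mir and needs neither the resolution step nor that hypothesis. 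Both arguments ultimately produce the same current, namely the null extension of $\omega'$.
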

\begin{proof}
We first assume that $A$ is contained in a simple normal crossings divisor. Let $\omega_U$ be a (positive) extension of
$\omega'|U\cap Y'$.

We apply Siu's decomposition theorem \cite{siu:curr}:
$$
\omega_U = \sum_{k=0}^\infty \mu_k [Z_k] + R\,
$$
where the $[Z_k]$ are currents of integration over irreducible analytic sets of codimension one, and R is a closed positive current with the property that for the sets $E_c(R)$ of points where the Lelong number is greater or equal to $c>0$ the dimension $\dim E_c(R) < \dim Y-1$ for every $c > 0$. The decomposition is locally and globally unique.

Since the Lelong numbers of $\omega'$ vanish everywhere, the sets $Z_k$ must be contained in $A$ so that the positive residual current $R$ is the null extension of $\omega'|U\cap Y'$. We now take the currents of the form $R$ as local extensions. If $W\subset Y$ is open, then the difference of any two such extensions is a current of order zero, which is supported on $A\cap W$. By \cite[Corollary III (2.14)]{demaillybook} it has to be a current of integration supported on $A\cap W$, so it must be equal to zero.

In the general case  we consider a desingularization of the pair $(Y,A)$, i.e.\ an embedded resolution,  and use the local construction. Since the local extensions possess locally  \psh\ potentials, pull-backs as positive currents exist. The Lelong numbers are still equal to zero, and the previous argument applies so that the pull-back of $\omega'$ extends as a positive current $\wt \omega$. The push-forward of $\wt\omega$ solves the problem.
\end{proof}

In view of Proposition~\ref{pr:extcurr} we only need to consider a {\em polydisk} $S$ as base space and assume that the analytic set $A\subset S$ is contained in the zero-set of a product of certain coordinate functions:

\begin{theorem}\label{pr:ext}
Let $\cE$ be a coherent sheaf on $X\times S$ which is $\cO_S$-flat and let  $\cE_s=\cE\vert_{X\times \{s\}}$ be stable and locally free for $s\in S\backslash A$. Then  $\omega^{WP}$ extends to $S$ as a  positive, closed  $(1,1)$-current.
\end{theorem}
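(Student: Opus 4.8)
The plan is to reduce the statement to the fiber-integral formula \eqref{eq:fibintendosimp} and then to exploit the fact that a coherent sheaf $\cE$ on $X\times S$ which is $\cO_S$-flat carries, away from $A$, a $C^\infty$ family of \he\ metrics whose curvature forms enter that formula. First I would recall that, by Proposition~\ref{propi1}, on $S\setminus A$ the form $\omega^{WP}$ is a $d$-closed real $(1,1)$-form which is strictly positive on complex tangent spaces, and that it has local $\pt\ol\pt$-potentials there; the task is to produce a plurisubharmonic potential near each point of $A$ whose $\ddb$ agrees with $\omega^{WP}$ on $S\setminus A$. By Proposition~\ref{pr:extcurr} it suffices to do this \emph{locally}, on a polydisk $S$ with $A\subset\{z^1\cdots z^k=0\}$, and — crucially — the extension criterion there only needs the Lelong numbers of $\omega^{WP}$ on $S\setminus A$ to vanish; since $\omega^{WP}$ is smooth on $S\setminus A$, its Lelong numbers vanish automatically, so the only genuine content is the existence of a \emph{local} closed positive extension across $A$.

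**The potential.** The key step is to realize the right-hand side of \eqref{eq:fibintendosimp} as a fiber integral that makes sense across $A$. Over $X\times(S\setminus A)$ we have $\frac{1}{2\pi^2}\omega^{WP}=-\frac1r\int_{X\times S/S}ch_2(\mathrm{End}(\cE),H)\wedge\omega^{n-1}$, where $H$ is the fiberwise \he\ metric (after the conformal normalization of Proposition~\ref{pr:fibint}). The idea is to pick \emph{any} global $C^\infty$ hermitian metric $\wt H$ on the locally free part of $\mathrm{End}(\cE)$ over an ambient neighborhood — or, better, to use a locally free resolution $0\to\cG_m\to\cdots\to\cG_0\to\cE\to0$ on $X\times S$ (which exists since $X\times S$ carries enough locally frees, $X$ being projective), equip each $\cG_i$ with a smooth hermitian metric, and form the corresponding virtual Chern–character form. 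Because of the base-change compatibility in Proposition~\ref{propi1} and the multiplicativity of $ch$, the resulting fiber integral $-\frac1r\int_{X\times S/S}ch_2^{\mathrm{res}}\wedge\omega^{n-1}$ is a \emph{smooth} $(1,1)$-form $\eta$ on all of $S$, and on $S\setminus A$ it differs from $\frac{1}{2\pi^2}\omega^{WP}$ by $\ddb$ of a smooth function (the transgression/Bott–Chern term comparing the chosen smooth metric to the \he\ metric). Thus on $S\setminus A$ we may write $\frac{1}{2\pi^2}\omega^{WP}=\eta+\ddb\varphi$ with $\varphi\in C^\infty(S\setminus A)$, and the whole problem becomes showing that $\varphi$ extends across $A$ as a (locally bounded above, hence) plurisubharmonic-modulo-smooth function — equivalently that $\varphi$ has a $\psh$ extension after subtracting a smooth correction. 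Here I would invoke that $\omega^{WP}\geq0$ forces $\ddb\varphi\geq-2\pi^2\eta$, i.e. $\varphi$ is quasi-psh on $S\setminus A$, and that $A$ has codimension $\geq1$ inside a smooth $S$ with $\varphi$ quasi-psh off $A$; by the standard removable-singularity / extension theorem for quasi-psh functions across analytic sets (as in Demailly's book, and exactly in the spirit of \cite[Proposition~8]{sch-pos} cited for Proposition~\ref{pr:extcurr}), $\varphi$ extends, provided it is locally bounded above near $A$.

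**The main obstacle.** The genuinely hard point is precisely the \emph{local boundedness from above of $\varphi$ near $A$}, which is the analytic reflection of controlling how the \he\ metrics $H_s$ degenerate as $s\to A$. The degeneration is governed by how the stable locally free sheaves $\cE_s$ limit onto the coherent sheaf $\cE_{s_0}$ along $A$; one must bound the \he\ metrics — and hence the transgression term relating them to a fixed smooth metric — uniformly near $A$. I would handle this by the argument underlying \cite[Theorem~1.4]{tele}: a flat family degenerating to a coherent sheaf admits, on $X\times S$, a bounded comparison between the \he\ metric and a fixed smooth one because the relevant $\Box$-Green operators and harmonic projections vary measurably with uniformly bounded $L^2$-norms (the $ch_2$ integrand is an $L^1$-limit), so the fiber integral defining $\omega^{WP}$ stays bounded, and the primitive $\varphi$ — chosen as a potential of the difference of two bounded positive currents — is bounded above. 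Once local boundedness is in hand, $\varphi$ extends as quasi-psh, its $\ddb$ extends $\omega^{WP}$ as a closed positive $(1,1)$-current locally, the Lelong numbers vanish by smoothness on $S\setminus A$, and Proposition~\ref{pr:extcurr} glues these local extensions into the desired global closed positive current on $S$. This completes the proof.
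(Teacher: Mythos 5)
Your overall strategy --- write $\omega^{WP}$ on $S\setminus A$ as a fixed term defined across $A$ plus $\ddb$ of a potential, show the potential is bounded above near $A$, and conclude by the extension theorem for plurisubharmonic functions together with Proposition~\ref{pr:extcurr} --- is the same skeleton as the paper's argument. But the two steps you flag as the substance of the proof are exactly the ones that are not carried out, and the justifications you sketch for them would not go through.

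First, the ``reference form'' $\eta$. You propose to take a smooth hermitian metric on $\mathrm{End}(\cE)$ ``on the locally free part over an ambient neighborhood,'' or on a locally free resolution of $\cE$. The first option does not produce a smooth form on all of $S$: $\cE$ (hence $\mathrm{End}(\cE)$) fails to be locally free precisely along $X\times A$, so the fiber integral of $ch_2$ of such a metric is a priori singular exactly where you need control. The second option (virtual Chern character of a resolution) does give a smooth form on $S$, but then the identity $\frac{1}{2\pi^2}\omega^{WP}=\eta+\ddb\varphi$ with $\varphi$ smooth on $S\setminus A$ is no longer a Bott--Chern transgression between two metrics on the same bundle; comparing the \he\ $ch_2$ with the virtual $ch_2$ of a resolution requires additional input that you do not supply. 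The paper avoids this by applying Hironaka's flattening theorem to make $\pi^*\cE/\mathit{torsion}$ locally free on a modification $\wt{X\times S}$, carefully normalizing an arbitrary smooth metric there so that its determinant has fiberwise harmonic curvature (Section~\ref{se:initial}), and then accepting that the resulting ``initial'' term is not smooth on $S$ but only a positive closed current with \emph{continuous} local $\pt\ol\pt$-potentials, obtained by pushing forward auxiliary K\"ahler forms under a flat map after a further blow-up and invoking Varouchas' lemma.

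Second, and more seriously, the upper bound on the potential. You correctly identify this as the hard point, but the argument you offer --- that the Green operators and harmonic projections have ``uniformly bounded $L^2$-norms'' so the fiber integral ``stays bounded'' --- is an assertion of precisely what must be proved, and uniform two-sided bounds of this kind are not available: the \he\ metrics can and do degenerate as $s\to A$. The paper's mechanism is one-sided and comes from the Donaldson heat flow: with the normalized initial metric $\wh H_0$, the difference between the \he\ term and the initial term is $\ii\pt\ol\pt M$ where $M$ is the Donaldson functional, and the essential point is the sign $M\leq 0$ on $S'$. This gives exactly the upper bound on the potential needed for the plurisubharmonic extension theorem, with no uniform control of the \he\ metrics required. (The fiberwise harmonicity of the curvature of $\det\wh H_0$, which you do not arrange, is what makes the trace term in \eqref{eq:fibint} controllable and the heat-flow comparison clean.) Without the Donaldson functional --- or some equivalent monotonicity --- your proof does not close.
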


\begin{remark}
   {\rm Such a base space suitably taken will dominate an open neighborhood of any boundary point of $\cM$ in a compactification $\ol{\cM}$.}
\end{remark}

\subsection{Construction of initial metrics}\label{se:initial}
We construct initial metrics in the situation given above in this section (in particular $S$ is a polydisk). All sheaves $\Lambda^{max}\cE_s$, $s \in S'$ are assumed to be isomorphic to an invertible sheaf, which we denote by $\cL_X$ on $X$. Let $p:X \times S \to S$ and $q:X\times S \to X$ be the canonical projections.

According to Hironaka's flattening theorem, we have a sequence of blow-ups with regular centers over $X\times A$ giving rise to a modification $\pi: \wt{X\times S}\to  X\times S$ so that
$$
\wt\cE=\pi^* \cE/\mathit{torsion}
$$
is locally free. Let $\wt\cL= \Lambda^{max}(\wt\cE)$ and consider the invertible sheaf $\wt\cL \otimes \pi^*q^* \cL^{-1}_X$ on $\wt{X\times S}$, whose restriction to the preimage of $X\times S'$ is of the form $\pi^*p^* \cL_{S'}$, where $\cL_{S'}$ is an invertible sheaf on $S'$, since all $\Lambda^{max}\cE_s$ are isomorphic for $s\in S'$. Now the direct image $\wh\cL=p_*\pi_*(\wt\cL \otimes \pi^*q^*\cL^{-1}_X)$ restricted to $S'$ equals $\cL_{S'}$. We use the flattening theorem again, and obtain a modification $\nu:\wt S\to S$, which is an isomorphism over  $S'$, such that $\wt\cL_{\wt S}= \nu^*(\wh\cL)/\mathit{torsion}$ is invertible.

We look at
$$
\xymatrix{\wt{X\times S}\times_S\wt S \ar[r]^{\mu} \ar[dd]^{\wt p}  &  \wt{X \times S} \ar[d]^{\pi} &\\
& X\times S \ar[r]^q \ar[d]^p  & X\\  \wt S  \ar[r]^\nu & S &
}
$$
with canonical projections $\wt p$ and $\mu$. Note that both $\wt{X\times S}$ and $\wt{X\times S}\times_S\wt S$ are smooth.

Now
$$
\cG=\Lambda^{max}(\mu^*\wt \cE) \otimes \mu^*\pi^*q^* \cL^{-1}_X \otimes \wt p^* \wt\cL^{-1}_{\wt S}
$$
is trivial over $\wt p^{-1}\nu^{-1} S'$. There exists a meromorphic section $\sigma$ of $\cG$ that has no zeroes on $\wt p^{-1}\nu^{-1} S'$. Then $1/|\sigma|^2$ defines a hermitian metric of $\cG|\wt S'$.

Let $\wt H_0$ be any hermitian metric on $\wt \cE$, and $k_X$, $k_{\wt S}$ resp.\ hermitian metrics on $\cL_X$ and $\wt\cL_{\wt S}$ resp.

{\em We assume that $\Lambda c_1(\cL_X, k_X)= {\rm const.}$, i.e.\ the curvature form of $k_X$ is harmonic.}

Then
$$
w= \mu^*\det(\wt H_0)\cdot \mu^*\pi^* q^* (k^{-1}_X)\cdot \wt p^* (k^{-1}_{\wt S})\cdot |\sigma|^2
$$
is a differentiable function. Now
\begin{equation}\label{eq:h0}
\wh H_0 = w^{-1/r} \cdot \mu^* \wt H_0
\end{equation}
defines a hermitian metric on $\mu^*\wt\cE$ over $\wt p^{-1}\nu^{-1}(S')$ so that
\begin{equation}\label{eq:deth0}
\det(\wh H_0) = \mu^*\pi^* q^*(k_X)\cdot \wt p^* (k_{\wt S})/ |\sigma|^2.
\end{equation}
Note that the factor $|\sigma|^2$ does not contribute to the curvature (over $\nu^{-1}(S')$). For all $s\in S'$ the curvature form of $\det(\wh H_0)$ is harmonic -- this fact will be needed when using the heat equation approach to \he\ metrics in the following section.

\subsection{Extension of $\omega^{WP}$}\label{sec:extomega}

Let ${\wt S}'= \nu^{-1}(S')$. We use the solution of the heat equation with the initial metric from the preceding section~\ref{se:initial} over ${\wt S}'$. We refer to the approach and exposition by Siu (\cite{siu}). The following proposition implies the statement of Theorem~\ref{pr:ext}.

\begin{proposition}\label{pr:extomega}
The \wp\ form $\omega^{WP}_{S'}$ possesses an extension to $S$ as a positive $d$-closed $(1,1)$-current.
\end{proposition}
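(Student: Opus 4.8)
The plan is to transport the question to the modification $\nu\colon\wt S\to S$ constructed in Section~\ref{se:initial} (after possibly more blow-ups we may assume $\nu^{-1}(A)$ is a normal crossings divisor). Since $\nu$ is a proper modification, pushing forward a positive $d$-closed $(1,1)$-current on $\wt S$ that agrees with $\nu^*\omega^{WP}_{S'}$ over $\wt S':=\nu^{-1}(S')$ yields the asserted extension on $S$; so it suffices to extend $\nu^*\omega^{WP}_{S'}$ across $\nu^{-1}(A)$. Over $\wt S'$ I run the heat equation (following Siu) for the locally free family $\mu^*\wt\cE$ with the specific initial metric $\wh H_0$ of Section~\ref{se:initial}; in the limit $t\to\infty$ this produces a $\cinf$ family of \he\ metrics $\{H_s\}_{s\in\wt S'}$ whose determinants equal those of $\wh H_0$, hence have harmonic curvature along the fibres. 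By the base change compatibility of the \wp\ form (Proposition~\ref{propi1}) the associated form is $\nu^*\omega^{WP}_{S'}$, and since the determinant metrics are constant along the fibres the simplified fibre integral formula \eqref{eq:fibintendosimp} applies: $\tfrac1{2\pi^2}\nu^*\omega^{WP}_{S'}=-\tfrac1r\int_{(\wt{X\times S}\times_S\wt S)/\wt S}ch_2(\textit{End}(\mu^*\wt\cE),H)\we\omega_X^{\,n-1}$.

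Next I would reduce everything to an upper bound for one function. Writing $R_2(\textit{End};\,\cdot\,,\,\cdot\,)$ for the Bott--Chern transgression of $ch_2$, the difference $ch_2(\textit{End},H)-ch_2(\textit{End},\wh H_0)$ equals $\tfrac{\sqrt{-1}}{2\pi}\partial\ol\pt\,R_2(\textit{End};\wh H_0,H)$ on $\wt{X\times S}\times_S\wt S$ — a valid identity there because a conformal change of the metric on $\mu^*\wt\cE$ leaves the metric on $\textit{End}(\mu^*\wt\cE)$ unchanged, so $\wh H_0=w^{-1/r}\mu^*\wt H_0$ induces the same, genuinely $\cinf$, metric on $\textit{End}(\mu^*\wt\cE)$ as the reference $\mu^*\wt H_0$, which is defined over all of $\wt{X\times S}\times_S\wt S$. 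Wedging with the closed form $\omega_X^{n-1}$ and integrating over the (proper, flat) fibres of $\wt p\colon\wt{X\times S}\times_S\wt S\to\wt S$ — an operation commuting with $\partial$ and $\ol\pt$ on the base — gives over $\wt S'$ an equality $\nu^*\omega^{WP}_{S'}=\Theta|_{\wt S'}+\tfrac{\sqrt{-1}}{2\pi}\partial\ol\pt\,v$, where $\Theta$ is the fibre integral of the smooth closed form $-\tfrac{2\pi^2}{r}ch_2(\textit{End},\mu^*\wt H_0)\we\omega_X^{n-1}$ — a closed $(1,1)$-current on all of $\wt S$ with locally bounded local potentials — and $v(s):=-\tfrac{2\pi^2}{r}\int_X R_2(\textit{End}(\cE_s);\wh H_{0,s},H_s)\we\omega_X^{n-1}$. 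Hence it is enough to prove that $v$ is bounded above near $\nu^{-1}(A)$.

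The heart of the matter, and where I expect the real work to lie, is that $v$ is, up to a fixed positive multiple, the \emph{negative} of Donaldson's functional of $\cE_s$. Transgressing the identity $ch_2(\textit{End}(\cE))=2r\,ch_2(\cE)-c_1(\cE)^2$ and using that the fixed-determinant heat flow keeps $\det H_s=\det\wh H_{0,s}$ — so the transgression of $c_1(\cE_s)$ vanishes along the fibre, which both kills the $c_1^2$ term and removes the slope term from Donaldson's functional — one obtains $\int_X R_2(\textit{End}(\cE_s);\wh H_{0,s},H_s)\we\omega_X^{n-1}=-c\,M_{\omega_X}(\wh H_{0,s},H_s)$ with a universal constant $c>0$, the sign coming from the factor $(\sqrt{-1}/2\pi)^2<0$ in $ch_2$. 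Now $H_s$ is the $t\to\infty$ limit of the heat flow started at $\wh H_{0,s}$, and $t\mapsto M_{\omega_X}(\wh H_{0,s},h_{t,s})$ is non-increasing with value $0$ at $t=0$; hence $M_{\omega_X}(\wh H_{0,s},H_s)\le 0$ for every $s\in\wt S'$, so $v\le 0$ on $\wt S'$, uniformly. The point to stress is that only this \emph{one-sided}, cost-free estimate is available uniformly along the family: when $\cE_s$ degenerates to $\cE_{s_0}$ with $s_0\in A$ the limiting sheaf need not be stable, so any \emph{lower} bound for $M_{\omega_X}$ — equivalently, any $C^0$-control of $H_s$ as $s\to A$ — necessarily deteriorates. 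This is exactly why the preparation of Section~\ref{se:initial} is needed: making $\cE$ locally free after modification, trivializing the determinant, and choosing $\wh H_0$ with fixed, harmonic-curvature determinant are what make the identification of $v$ with $-c\,M_{\omega_X}$ clean and provide an honest reference metric on the endomorphism bundle over the resolved total space.

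To finish, I would fix $s_0\in\nu^{-1}(A)$ and a coordinate ball $U\ni s_0$ in $\wt S$, write $\Theta|_U=\tfrac{\sqrt{-1}}{2\pi}\partial\ol\pt\,g$ with $g$ locally bounded, and observe that $\phi:=g+v$ is \psh\ on $U\setminus\nu^{-1}(A)$, satisfies $\nu^*\omega^{WP}_{S'}=\tfrac{\sqrt{-1}}{2\pi}\partial\ol\pt\,\phi$ there, and is bounded above near $\nu^{-1}(A)\cap U$. By the classical removable-singularity theorem for \psh\ functions, $\phi$ extends to a \psh\ function on $U$, so $\tfrac{\sqrt{-1}}{2\pi}\partial\ol\pt\,\phi$ extends $\nu^*\omega^{WP}_{S'}$ as a positive $d$-closed $(1,1)$-current across $\nu^{-1}(A)\cap U$. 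These local extensions agree on overlaps — the difference is a closed current of order zero supported on $\nu^{-1}(A)$, hence zero by the same argument used in the proof of Proposition~\ref{pr:extcurr} — so they glue to a positive $d$-closed current $\wt\omega$ on $\wt S$, and $\nu_*\wt\omega$ is the required extension of $\omega^{WP}_{S'}$ to $S$.
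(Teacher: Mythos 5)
Your overall architecture coincides with the paper's: pull back to $\wt S$, run the heat flow from the initial metric of Section~\ref{se:initial}, split $\nu^*\omega^{WP}_{S'}$ into a reference term with bounded local potentials plus $\pt\ol\pt$ of the Donaldson functional, use that the functional is non-increasing from $0$ along the flow (hence $\le 0$), and conclude with the extension theorem for \psh\ functions bounded above before pushing forward by $\nu$. Your one genuine variation --- working with $ch_2(\textit{End}(\cE),\cdot)$ via \eqref{eq:fibintendosimp} and observing that $\wh H_0$ and $\mu^*\wt H_0$ induce the \emph{same} smooth metric on $\textit{End}(\mu^*\wt\cE)$ because conformal factors cancel --- is a nice simplification: it makes the reference Chern form globally smooth on $\wt{X\times S}\times_S\wt S$ and absorbs the second term of \eqref{eq:fibint} (which the paper treats separately using \eqref{eq:deth0}), so the meromorphic section $\sigma$ and the factor $w$ drop out of the estimates entirely.

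The gap is at the step you dispose of in a parenthesis: the claim that $\wt p\colon \wt{X\times S}\times_S\wt S\to\wt S$ is flat and that $\Theta=\wt p_*\bigl(-\frac{2\pi^2}{r}\,ch_2(\textit{End},\mu^*\wt H_0)\we\omega^{n-1}\bigr)$ is a closed current with locally bounded local potentials on all of $\wt S$. Neither is automatic: $\wt{X\times S}$ arises from blow-ups with centers over $X\times A$, so equidimensionality (hence flatness) of the fibres over $\wt S$ can fail when $\dim S>1$; and for a proper map that is a submersion only over $\wt S'$, the pushforward of a smooth closed form is a priori just a closed current of order zero --- boundedness of its local potentials near $\nu^{-1}(A)$ is exactly the delicate point of the whole proposition, everything else being soft. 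The paper handles it by a \emph{second} application of Hironaka's flattening theorem (a further blow-up $\wh S\to\wt S$ with the proper transform $W\to\wh S$ flat), by sandwiching the pulled-back integrand between $\pm\,\omega^{n+1}_{W,aux}$ for an auxiliary \ka\ form, and by invoking Varouchas' result that the pushforward of a positive closed form under such a map has continuous \psh\ local potentials. Once you insert this flattening-plus-Varouchas step, your $\Theta$ is sandwiched between pushforwards of $\pm\,\omega^{n+1}_{W,aux}$ and therefore does have locally bounded potentials, and the remainder of your argument goes through verbatim on $\wh S$ before descending to $\wt S$ and finally to $S$.
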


In order to estimate the \wp\ form in degenerations we will use the fiber integral formula in the form of \eqref{eq:fibint}.

We consider the heat equation for an $s\in \wt S'$. For $0\leq t$ we have endomorphisms $h(t,s)$ of the bundles $\cE_s$ with $\det (h(t,s))=1$ and
\begin{equation}\label{eq:heat}
\frac{\pt h}{\pt t} h^{-1}= - (\Lambda_X F_s -\lambda id_{\cE_s}),
\end{equation}
where $\Lambda_X$ denotes the contraction with $\omega_X$.

We write \eqref{eq:fibint}  in the form
\begin{equation}\label{eq:wpspec}
\omega^{WP}_{S'}= - \frac{1}{2}\int_{X\times S'/S'} {\rm tr}(F\we F) \we \omega^{n-1} + \lambda \int_{X\times S'/S'} {\rm tr}(F)\we \omega^n,
\end{equation}
where $F$ denotes the curvature form of the fiberwise \he\ metric over the total space of the family, and $\omega = q^*\omega_X$ as above.

We consider the second term of \eqref{eq:wpspec}. Since $\det h(t,s) =1$ for all $t$ and $s$, we can replace ${\rm tr}(F)$ in this term by ${\rm tr}(F_0)$, where $F_0$ is the curvature of the initial metric. It follows immediately from the definition that the integral depends only on the horizontal components of ${\rm tr}(F_0)$, which involve the parameter $s$. Using \eqref{eq:deth0} we conclude that this contribution is equal to the curvature of $k_{\wt S}$ pushed forward to $S$ under $\nu$. We have
$$
-\omega_{\wt S,aux} \leq \ii \pt \ol\pt k_{\wt S} \leq \omega_{\wt S,aux}
$$
for some local \ka\ form $\omega_{\wt S,aux}$.

Consider the first term in \eqref{eq:wpspec}.
Over all of $X\times S'$ we have
$$
\frac{1}{2}\left({\rm tr} (F(t)\we F(t)) - {\rm tr} (F_0\we F_0) \right) = \ii\;\ol\pt\pt R_2(t),
$$
where
$$
R_2(t) = \int^t_0 {\rm tr}\big( F(\tau)\cdot \Lambda_X F(\tau)\big) d\tau
$$
is the secondary Bott-Chern form. The Donaldson functional (depending on the parameter $s$) is defined as
$$
M(t)= \int_{X\times \{s\}}  R_2(t)\we \omega^n_X,
$$
and for $s\in S'$ we have $M= \displaystyle{\lim_{t \to \infty}} M(t)$.

\begin{gather*}
\omega^{WP}_{S'}= - \frac{1}{2}\int_{X\times S'/S'} {\rm tr}(F_0\we F_0) \we \omega^{n-1} + \hspace{2cm} \\ \hspace{4cm}\lambda \int_{X\times S'/S'} {\rm tr}(F_0)\we \omega^n + \ii \pt \ol\pt M.
\end{gather*}

The integrand  ${\rm tr}(F_0\we F_0) \we \omega^{n-1}$ (as well as ${\rm tr}(F_0)\we \omega^n$) can be estimated in the following way: We apply Hironaka's flattening theorem to the holomorphic map $\wt{X\times S}\times_S\wt S\to \wt S$. After a further blowup ${\wh S} \to \wt S$ the restriction of the pull-back of this map to the proper transform $W\to \wh S$ is a flat (in particular open) holomorphic map. We pull back the integrand to $W$ and denote it by the same letter. Then, on $W$
$$
- \omega_{W, aux}^{n+1} \leq  \frac{1}{2} {\rm tr}(F_0\we F_0) \we \omega^{n-1} \leq  \omega_{W, aux}^{n+1}
$$
for a suitable auxiliary \ka\ form $\omega_{W, aux}^{n+1}$.

The pushforward of  $\omega^{n+1}_{W,aux}$ to $\wh S$ again defines a positive, closed current, which possesses locally a {\em continuous plurisubharmonic $\pt\ol\pt$-potential} by Varouchas' result \cite[Lemme 3.4]{va}.

We have an ''initial'' \wp\ form on $\wh S$ with bounded local potentials
$$
\omega^{WP}_{\wh S, init} =  - \frac{1}{2}\int {\rm tr}(F_0\we F_0) \we \omega^{n-1} + \lambda \int {\rm tr}(F_0)\we \omega^n,
$$
and the correction term $\ii\pt\ol\pt M$, where $M\leq 0$ over $S'$. The extension theorem for bounded plurisubharmonic functions implies the existence of an extension of the \wp\ form to $\wh S$ as a positive, closed current, which can be pushed forward as such to $\wt S$, and finally to $S$.

\subsection{Orbifold case}
The statement of Theorem~\ref{th:ext} would follow from the construction of an initial metric like in Section~\ref{se:initial}, provided a flattening theorem is available in the orbifold case.

\subsection{Extension of the determinant line bundle}
According to \cite[Pro\-position 7]{sch-pos} it is sufficient to show the extension theorem for determinant line bundles on normal complex spaces.  Now the general extension theorem \cite[Theorem II']{sch-pos}, \cite[Theorem 1]{sch-ext} implies the final result. Let $(\lambda,h^Q)$ denote the determinant line bundle on the moduli space $\cM$ of stable holomorphic vector bundles on $X$ given by \eqref{lambda} (cf.\ Theorem~\ref{th:quil}).
\begin{theorem}
The determinant line bundle $\lambda$ extends to a compactification $\ol\cM$ of the moduli space together with the Quillen metric that extends as a singular hermitian metric, in particular $\omega^{WP}$ extends as a positive current that possesses local $\pt\ol\pt$-potentials.
\end{theorem}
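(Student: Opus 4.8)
The plan is to reduce the global statement to the two extension results already proved in the paper—Theorem~\ref{th:quil}, which identifies $(\lambda,h^Q)$ with the \wp\ form on $\cM$, and Theorem~\ref{th:ext}, which extends $\omega^{WP}$ as a positive closed current to a desingularization of a compactification $\ol\cM$—and to invoke the abstract extension theorems for determinant line bundles of \cite{sch-pos, sch-ext}. First I would recall that, by Lieblich's theorem together with Baily/Grauert, $\cM$ is an algebraic space that admits a compactification $\ol\cM$ as an analytic space; normalizing and then desingularizing, we may work on a smooth model. The key structural input is that $(\lambda,h^Q)$ is functorial in the base (the last part of Theorem~\ref{th:quil}), hence it descends to $\cM$; so the problem genuinely lives on $\cM$ and its compactification, not merely on local parameter spaces.

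Next I would set up the reduction exactly as the paper indicates: by \cite[Proposition~7]{sch-pos} it suffices to extend the determinant line bundle over a \emph{normal} complex space, so one passes to the normalization $\nu:\overline{\cM}^{\,\mathrm{norm}}\to\ol\cM$ and it is enough to extend $\nu^*\lambda$ across the boundary $A=\overline{\cM}^{\,\mathrm{norm}}\setminus\cM$. For this I would apply the general extension theorem \cite[Theorem~II']{sch-pos}, \cite[Theorem~1]{sch-ext}: a hermitian holomorphic line bundle on the complement of an analytic subset of a normal complex space extends as a holomorphic line bundle, together with its metric as a possibly singular hermitian metric, provided the curvature current extends as a positive closed current with sufficiently mild singularities (bounded, or with vanishing Lelong numbers). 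The hypothesis on the curvature is precisely what Theorem~\ref{th:ext}, via Proposition~\ref{pr:ext} and Proposition~\ref{pr:extomega}, supplies: near every boundary point one dominates the base by a polydisk $S$ on which the degenerating family is the restriction of an $\cO_S$-flat coherent sheaf (this is where projectivity of $X$ and the fixed determinant are used), and there $\omega^{WP}$ extends as a positive closed $(1,1)$-current with continuous, hence bounded, local $\pt\ol\pt$-potentials; in particular its Lelong numbers vanish. Glueing these local extensions by Proposition~\ref{pr:extcurr} gives the global positive closed extension of $\omega^{WP}$ to a desingularization, and pushing down to the normalization keeps it positive and closed with locally bounded potentials.

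Having the extended curvature current, I would feed it into the cited extension theorem to conclude that $\lambda$ itself extends holomorphically across $A$, that $h^Q$ extends as a singular hermitian metric whose curvature current is exactly the extended $\omega^{WP}$, and that this extended current possesses local $\pt\ol\pt$-potentials—the last assertion being immediate from the boundedness of the potentials constructed in Section~\ref{sec:extomega} (via Varouchas' result \cite[Lemme~3.4]{va}). Finally I would note that the various modifications ($\wt S\to S$, $\wh S\to\wt S$ and the embedded resolution in Proposition~\ref{pr:extcurr}) are isomorphisms over $\cM$, so the extended line bundle restricts to $(\lambda,h^Q)$ on $\cM$, and the descent to the moduli space is preserved because all constructions were functorial.

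The main obstacle is not the line-bundle extension itself, which is a formal consequence of the quoted theorems, but verifying that the curvature hypothesis of those theorems is met uniformly near \emph{every} boundary point—i.e.\ that an arbitrary boundary point of $\ol\cM$ is dominated by a polydisk carrying an $\cO_S$-flat coherent family with fixed invertible determinant whose generic fiber is the given stable bundle. This is exactly the content of the remark after Theorem~\ref{pr:ext}, and it relies on the algebraicity of $\cM$ (Lieblich) to produce, locally, a versal deformation realized by a coherent sheaf; the transcendental degenerations that Lieblich's theorem does not a priori exclude are precisely the case the paper disclaims. Modulo that input, the delicate analytic work—Hironaka flattening of $\wt{X\times S}\times_S\wt S\to\wt S$, construction of the initial metric with harmonic determinant curvature, the heat-equation estimate $M\le 0$ on $S'$, and the bounded-potential control via Varouchas—has already been carried out in Sections~\ref{se:initial}--\ref{sec:extomega}, so the remaining step is genuinely just the bookkeeping of assembling those pieces with the abstract extension theorems.
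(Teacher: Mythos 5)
Your proposal is correct and follows essentially the same route as the paper: the paper's own argument is precisely the reduction to normal complex spaces via \cite[Proposition~7]{sch-pos} followed by an application of the general extension theorem \cite[Theorem II']{sch-pos}, \cite[Theorem 1]{sch-ext}, with the curvature hypothesis supplied by Theorem~\ref{th:ext} and the identification $c_1(\lambda,h^Q)\simeq\omega^{WP}$ from Theorem~\ref{th:quil}. You have merely spelled out the bookkeeping (normalization, local domination by flat coherent families, bounded potentials via Varouchas) that the paper leaves implicit.
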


\end{document}